\newcommand{\eqmathbox}[2][eq]{\eqmakebox[#1]{$\displaystyle #2$}}
\def\loc{\operatorname{loc}}
\definecolor{citation}{rgb}{0.11,0.67,0.84}
\definecolor{formula}{rgb}{0.1,0.2,0.6}
\definecolor{url}{rgb}{0.11,0.67,0.84}
\newcommand{\medint}{-\kern -,375cm\int}
\newcommand{\medintinrigo}{-\kern -,315cm\int}
\newcommand{\linethrough}{\mathpalette\@thickbar}
\newcommand{\@thickbar}[2]{{#1\mkern0mu\vbox{
    \sbox\z@{$#1#2\mkern-0.5mu$}%
    \dimen@=\dimexpr\ht\tw@-\ht\z@+2\p@\relax 
    \hrule\@height0.5\p@ 
    \vskip\dimen@
    \box\z@}}
}
\newtheorem{theorem}{Theorem}[section]
\newtheorem{lemma}[theorem]{Lemma}
\newtheorem{remark}{Remark}[section] 
\newtheorem{corollary}[theorem]{Corollary}
\newtheorem{definition}[theorem]{Definition}
\theoremstyle{plain}  
\numberwithin{equation}{section}
\newcommand{\reqnomode}{\tagsleft@false}
\def\dx{\,{\rm d}x}
\def\dt{\,{\rm d}t}
\def\dy{\,{\rm d}y}
\def \d{\,{\rm d}}
\def\dist{\,{\rm dist}}
\def\supp{\,{\rm supp}}
\DeclareRobustCommand*{\bfseries}{%
  \not@math@alphabet\bfseries\mathbf
  \fontseries\bfdefault\selectfont
  \boldmath
}
\newlength{\defbaselineskip}
\newcommand{\mint}{\mathop{\int\hskip -1,05em -\, \!\!\!}\nolimits}
\newcommand{\N}{\mathbb{N}}
\newcommand{\R}{\mathbb{R}}
\newcommand{\mm}{\mathsf{M}}
\newcommand{\nn}{\mathsf{N}}
\newcommand{\rr}{\varrho}
\newcommand{\nr}[1]{\lVert #1 \rVert}
\newcommand{\tx}[1]{\textnormal{\texttt{#1}}}
\def\loc{\operatorname{loc}}
\newcommand{\vp}{\varphi}
\def\eqn#1$$#2$${\begin{equation}\label#1#2\end{equation}}
\def\supp{\,{\rm supp }}
\def\XXint#1#2#3{{\setbox0=\hbox{$#1{#2#3}{\int}$}
     \vcenter{\hbox{$#2#3$}}\kern-.5\wd0}}
\def\XXint#1#2#3{{\setbox0=\hbox{$#1{#2#3}{\int}$ }
		\vcenter{\hbox{$#2#3$ }}\kern-.6\wd0}}
\title{On the Lavrentiev gap for manifold-valued maps}
\author[Antonini]{Carlo Alberto Antonini}  \address{Carlo Alberto Antonini \\ Istituto Nazionale di Alta Matematica ``Francesco Severi'' (INdAM) and
Dipartimento di Matematica e Informatica ``Ulisse Dini'',
Universit\`a di Firenze,
Viale Morgagni 67/A, 50134
Firenze,
Italy\\ ORCID ID: 0000-0002-7663-1090}
\email{\url{antonini@altamatematica.it}}
\author[De Filippis]{Filomena De Filippis}  \address{Filomena De Filippis\\Fachbereich Mathematik, Universität Salzburg, Hel lbrunner Str. 34, 5020 Salzburg, Austria \\ ORCID ID: 0000-0002-2784-1411}
\email{\url{filomena.defilippis@plus.ac.at}}
\author[Pacchiano Camacho]{Cintia Pacchiano Camacho}  \address{Cintia Pacchiano Camacho\\Instituto de Matem\'aticas, Unidad Cuernavaca, Universidad Nacional Aut\'onoma de M\'exico, Av. Universidad, 62210, Cuernavaca, Morelos, Mexico\\
ORCID ID: 0009-0004-6210-4013}
\email{\url{cintia.pacchiano@im.unam.mx}}
\begin{document}

\subjclass[2020]{35B65, 46E35, 58D15} 

\keywords{Smooth approximation, manifold-valued maps, double phase energies}

\begin{abstract}
We characterize necessary and sufficient conditions ensuring 
approximation by smooth maps in the Sobolev space $W^{1,\varphi}$ between suitable compact manifolds, where the prototypical example of Young function $\varphi$ is the double-phase integrand.
\end{abstract}

\maketitle
\begin{center}
\begin{minipage}{12cm}
  \small
  \setcounter{tocdepth}{1}
  \tableofcontents
\end{minipage}
\end{center}
\section{Introduction}
\noindent
\\

\noindent This work is devoted to establishing the density of smooth maps between Riemannian manifolds
in nonhomogeneous spaces characterized by the finiteness of certain anisotropic energies. Let
\eqn{M1}
$$ \mm \text{ be an $\tx{m}$-dimensional oriented compact Riemannian manifold} $$
\eqn{N1}
$$ \nn \text{ be an $\tx{n}$-dimensional oriented compact Riemannian manifold without boundary}$$
and consider the Sobolev space $W^{1,\vp}$, where $\vp$ is a Young function. The problem of approximating Sobolev maps between manifolds is classical, and its resolution depends critically on both the function space and the topology of the manifolds. For maps with values in $\R^N$ and Young function $\varphi(t)=t^p$,  the approximation argument is straightforward: standard convolution with a smooth kernel produces a sequence of smooth maps that converge strongly. 
When the target is a manifold $\nn$, the situation is more delicate.   Convolution now takes values in the convex hull of $\nn$, so one must subsequently project these values back onto $\nn$, for instance by using the nearest-point projection.
 This procedure works  in the classical Sobolev setting $\vp(t)=t^p$ for the  superdimensional case $p \ge \tx{m}$. Indeed, by Morrey-Sobolev embedding, maps in $W^{1,p}(\mm,\nn)$ are continuous when $p>\tx{m}$ or belong to the space of functions with vanishing mean oscillation when $p = \tx{m}$ (see \cite{Br}), so projecting them back onto $\nn$ then yields a sequence in $C^\infty(\mm,\nn)$ converging strongly in $W^{1,p}(\mm,\nn)$. The case $p < \tx{m}$ is more subtle, and the possibility of strong approximation depends on the topology of both manifolds. Even for the unit sphere $\tx{S}^2$ the problem is nontrivial. A classical example due to Schoen \& Uhlenbeck \cite{SU} is the map
\begin{displaymath}
u: \tx{B}^3\to \tx{S}^2, \qquad u(x) = \frac{x}{|x|}.
\end{displaymath}
Then, $u \in W^{1,p}(\tx{B}^3,\tx{S}^2)$, for $1 \leq p<3$, but it cannot be strongly approximated in $W^{1,p}(\tx{B}^3, \tx{S}^2)$ by smooth maps with values in $\tx{S}^2$ whenever $2 \le p < 3$.  Seminal work by Bethuel \cite{B} and Hang \& Lin \cite{HL} clarified the approximation problem in the subcritical regime. They showed that for maps in $ W^{1,p}(\mm,\nn)$, the space $C^{\infty}(\mm, \nn)$ is dense if and only if $\mm$ satisfies the $([p]-1)$-extension property with respect to $\nn$ (see \cite[Definition 2.3]{HL}) and the $[p]$-homotopy group of $\nn$ is trivial, where $[p]$ denotes the integer part of $p$. Hajłasz \cite{HJ} later extended these results to more general manifold domains showing that density holds provided $\nn$ is $[{p}]$-connected. The space $W^{1,\tx{m}}(\mm, \nn)$ is thus borderline if one wishes to approximate with smooth maps  avoiding topological restrictions on the manifolds. However, this property persists in slightly larger Sobolev-type spaces, such as Orlicz spaces $W^{1,A}(\mm, \nn)$ built upon Young functions $A$ satisfying suitable conditions depending on $\tx{m}$. The key insight, originating in the work of Hajłasz, Iwaniec, Malý \& Onninen \cite{HIMO}, is that for maps in Sobolev spaces slightly larger than $W^{1,\tx{m}}(\mm,\nn)$, it is possible to detect certain sets on which a given map is still continuous. This leads to the property of {vanishing web oscillations}. The answer in the Orlicz setting was provided by Carozza \& Cianchi \cite{CC}, who showed that if the Young function $A$ satisfies the $\Delta_2$ and $\nabla_2$ conditions near infinity together with a sharp integral condition (which places $W^{1,A}$ either slightly larger than $W^{1,\tx{m}}$ or contained in  $W^{1,\tx{m}}$) then every map in $W^{1,A}(\mm, \nn)$ possesses vanishing web oscillations, establishing the density of smooth maps without any topological constraints on the manifolds. \\

\noindent In the present paper, we go beyond the classical and Orlicz settings, and consider the more general Musielak-Orlicz spaces $W^{1,\vp}(\mm, \nn)$, where the function $\vp$ depends explicitly on the variable $x \in \mm$ as well. We show that, under suitable assumptions on $\varphi$ and appropriate topological conditions on $\mm$ and $\nn$, the space $C^{\infty}(\mm,\nn)$ is dense in the Musielak-Orlicz space $W^{1,\varphi}(\mm,\nn)$. To our knowledge, this is the first systematic study of smooth approximation for manifold-valued maps in this kind of framework. We consider a map $\varphi:\mm \times [0,\infty)  \to [0,\infty)$ such that: 
\eqn{v1}
$$ x \mapsto \vp(x,t) \quad \text{is measurable for all } t \in [0,\infty),$$
and, for every $x \in \mm$,
\eqn{v0}
$$ \varphi(x,t)=0 \quad \text{if and only if} \quad t=0,$$
\eqn{v2}
$$ t \mapsto \varphi(x,t) \quad \text{is convex},$$
\eqn{v3}
$$ \exists  \beta, \gamma > 1: \ t \mapsto  \frac{\varphi(x,t)}{t^\beta}\text{ is almost increasing}, \quad  t \mapsto \frac{\varphi(x,t)}{t^\gamma} \text{ is almost decreasing },$$
\eqn{v3.1}
    $$
    \exists \ \tx{L} \geq 1: \ \tx{L}^{-1} \leq \varphi(x,1) \leq \tx{L}.
    $$
   Moreover, we assume the following: for all $\tx{d}> 1$ there exists a constant $\tx{c}\equiv \tx{c}(\beta,\gamma,\tx{d}) $, where $\beta$ and $\gamma$ are the exponents arising in \eqref{v3}, such that
\eqn{v4}
$$ \varphi(x_1,t) \leq \tx{c}(\varphi(x_2,t)+ 1), \text{ for all } t \in \left[0,\tx{d}\rr^{-\min \left \{1,\frac{\tx{m}}{\beta} \right \}}  \right],$$
for all $x_1,x_2 \in \overline{\mathbb{B}}_{\rr}$. Here, $\mathbb{B}_\rr $ is a geodesic ball of radius $\rr < 1/2$ and $\overline{\mathbb{B}}_\rr$ denotes its closure.
\\

\noindent
These hypotheses are naturally satisfied by a wide class of functionals, most notably the {double phase integrand} 
\eqn{dph}
$$\vp(x,t) = t^p + a(x) t^q,$$ 
with $0 \leq a(\cdot) \in C^{0,\alpha}$, $\alpha \in (0,1]$
and the exponents $1<p<q$ such that 
\eqn{1pq}
$$q \leq p+\alpha \max \left \{1,\frac{p}{\tx{m}} \right \},$$
see \cite[Lemma 3.1]{BGS}. A more general example satisfying our assumptions is provided by the variable-exponent integrand
\eqn{ve}
$$ \vp(x,t)=t^{p(x)} +a(x)t^{q(x)},$$
where $1<p \leq p(x) \leq q(x) \leq q $ and $p(\cdot), q(\cdot)$ are log-H\"older continuous and $0\leq a(\cdot) \in C^{0,\alpha}$, $\alpha \in (0,1]$ and \eqref{1pq} holds, see \cite[Lemma 3.3]{BGS}.\\

\noindent Double phase functionals were introduced by Zhikov \cite{Z1, Z0} in the framework of homogenization and in relation to the Lavrentiev phenomenon. Moreover, this class of variational integrals represents a model for strongly anisotropic materials, with
significant applications in elasticity theory. The interplay between the \(p\)- and \(q\)-growth regimes is dictated by the vanishing behavior of the coefficient \(a(\cdot)\). Over the past decade, an extensive regularity theory has emerged for double phase and, more generally, for nonuniformly elliptic functionals. In this direction, Marcellini \cite{ma2, ma4, ma1, ma5} investigated $(p,q)$-nonuniformly elliptic integrals, providing the first systematic results in this setting. Later contributions by Baroni, Colombo, and Mingione \cite{BCM, comi2, comi1} provided a complete framework for regularity, which subsequently inspired a vast body of literature, see \cite{bb901, G5, HO1,HO2,MNP,NC,jss, DFM, F1.1, F3.1, K2, DNP} and the references therein. Let us also mention that problems characterized by linear or nearly linear growth have been analyzed, among others, in \cite{BG, G2, G3}. \\

\noindent Now, condition \eqref{1pq} is connected with the nonuniform ellipticity of the associated Euler-Lagrange system
\begin{equation}\label{el}
-\mathrm{div}\,A(x,Du) = 0,
\end{equation}
where
\begin{equation*}
A(x,z) = |z|^{p-2}z + (q/p)\, a(x)|z|^{q-2}z.
\end{equation*}
As shown in \cite[Section 3]{BGS}, for \eqref{dph} and \eqref{ve}, it can be verified that assumption \eqref{v4} holds if \eqref{1pq} is true. Observe that the (nonlocal) ellipticity ratio
\begin{equation}\label{ratio}
\displaystyle 
\frac{\sup_{x\in \tx{B}} \text{highest eigenvalue of }\partial_z A(x, Du)} 
{\inf_{x\in \tx{B}} \text{lowest eigenvalue of }\partial_z A(x, Du)}
\,\approx\, 
1 + \|a\|_{L^{\infty}(\tx{B})} |Du|^{q-p}
\end{equation}
can become unbounded on any ball \(\tx{B} \subset \Omega\) intersecting the transition region \(\{ a(\cdot)=0\}\). Then, condition \eqref{1pq}, is precisely designed to ensure that \(a(\cdot)\) decays sufficiently rapidly to prevent uncontrolled degeneracy, thereby guaranteeing maximal gradient regularity for local minimizers. The necessity of such a condition has been confirmed through explicit counterexamples in \cite{BDS, Bal23, ELM, FMM, BS} showing that, when \eqref{1pq} is violated, the so-called {Lavrentiev phenomenon} can occur, i.e., the infimum of a variational integral over smooth functions is strictly greater than its infimum over the natural energy space, and this prevents the density of smooth functions in $W^{1,\vp}$. We remark that, in our manifold-valued framework, the compactness of $\nn$ ensures the boundedness of maps, which allows us to take $q \leq p+\alpha$, in the case $p \leq \tx{m}$.

We also recall that, if \( w \notin L^{\infty} \), the appropriate bound becomes 
\( q \leq p + p\alpha/\tx{m} \), 
corresponding to \eqref{v4} with 
\( t \in [0, \tx{d}\rr^{-\tx{m}/p}] \), see \cite{comi2}. A related but more delicate situation arises in the study of {strongly nonuniformly elliptic functionals}, where even the pointwise ellipticity ratio becomes unbounded, as \( |z| \to +\infty \), in a nonbalanced fashion:
\begin{equation*}
\displaystyle 
\frac{\text{highest eigenvalue of }\partial_z A(x, Du)} 
{\text{lowest eigenvalue of }\partial_z A(x, Du)}
\,\approx\, 
1 + |z|^{\delta}, 
\quad \delta >0.
\end{equation*}
Even though these systems are strongly nonuniformly elliptic, recent work has developed a Schauder’s regularity theory \cite{DefS1, DefS3}, which also applies to variational problems with nearly linear growth, see  \cite{DefS2, DFP, DFDFP}.\\ \\
\noindent
Our goal is to provide a comprehensive account of smooth approximation in $W^{1,\vp}(\mm, \nn)$, for $\varphi$ satisfying \eqref{v1}-\eqref{v4}. The first result establishes that if the function $\varphi$ satisfies suitable integral conditions allowing $W^{1,\vp}$ to be either slightly larger than $W^{1,\tx{m}}$ or contained in it, then maps in $W^{1,\varphi}(\mathsf{M},\mathsf{N})$ have vanishing web oscillations, and thus smooth maps are dense in the related Musielak-Orlicz Sobolev space. Here,
no topological assumption on the manifolds other than \eqref{M1},\eqref{N1} are assumed.

\begin{theorem} \label{theorem1}
    Let $\mm$ and $\nn$ be Riemannian manifolds  as in  \eqref{M1}, \eqref{N1}.  Let $\vp: \mm \times [0,\infty) \to [0,\infty)$ be a function satisfying \eqref{v1}-\eqref{v4} and denote by
    \begin{equation}\label{def:inf}
        \varphi^-_{\mm}(t) := \displaystyle\inf_{x \in \mm}\varphi(x,t).
    \end{equation}
 Assume either
  \eqn{1a}  $$ \tx{m}=2 \text{ and } \int^{\infty} \frac{\varphi^-_{\mm}(t)}{t^3} \dt  = \infty, $$
  or
   \eqn{1b} $$ \tx{m} \geq 3 \text{ and } \int^{\infty} \left (\frac{t}{\varphi^{-}_{\mm}(t)} \right )^{\frac{2}{\tx{m}-2}} \left ( \int_{t}^{\infty} \left (\frac{s}{\varphi^-_{\mm}(s)} \right )^{\frac{1}{\tx{m}-2}} \d s\right )^{-\tx{m}} \dt = \infty.$$
    Then, $C^{\infty}(\mm,\nn)$ is dense in $W^{1,\varphi}(\mm,\nn)$.
\end{theorem}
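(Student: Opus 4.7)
The approach follows the Hajłasz--Iwaniec--Malý--Onninen scheme revisited by Carozza--Cianchi in the Orlicz setting, adapted to the Musielak--Orlicz framework via the translation-comparability assumption \eqref{v4}. I would first localize: isometrically embed $\nn$ into some $\mathbb{R}^{\tx{k}}$, and use a partition of unity together with bilipschitz coordinate charts on $\mm$ to reduce the statement to approximating a single map $u \in W^{1,\vp}(Q, \nn)$, where $Q$ is a small cube in $\mathbb{R}^{\tx{m}}$ and \eqref{v4} remains in force up to the bilipschitz distortion of each chart.

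The heart of the matter is to establish \emph{vanishing web oscillations}: for every $\varepsilon > 0$ there exists $\delta_0$ such that, for every $\delta < \delta_0$, one can translate the standard cubical grid of side $\delta$ by a vector $\tau \in [0,\delta)^{\tx{m}}$ and obtain a web $\mathcal{W}$ on whose $(\tx{m}-1)$-skeleton the oscillation of $u$ is at most $\varepsilon$. This is where \eqref{1a} and \eqref{1b} enter crucially: by Fubini, $u$ restricted to almost every face is a Sobolev map in dimension $\tx{m}-1$, and the borderline integral conditions are exactly the Sobolev embeddings that force such restrictions to be continuous, or belong to VMO, with a modulus of continuity vanishing as the grid refines; averaging over the translation parameter $\tau$ then selects a good web.

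Once a good web is fixed, the approximant $\tilde u$ is built in two pieces. On each $(\tx{m}-1)$-face of $\mathcal W$ I would replace $u$ by a convolution $u*\rho_\sigma$ with $\sigma \ll \delta$, so that the values lie within an $O(\varepsilon)$-tubular neighborhood of $\nn$; inside each cell $Q_\alpha$ centered at $x_\alpha$, I would extend $\tilde u$ by the zero-homogeneous formula from $\partial Q_\alpha$. Composition with the smooth nearest-point retraction $\pi_\nn$ in the tube yields values in $\nn$, and a further cell-by-cell mollification produces a genuinely smooth map. The modular convergence $\tilde u \to u$ in $W^{1,\vp}$ is then checked cell by cell: the zero-homogeneous extension satisfies $|D\tilde u(x)| \lesssim |x-x_\alpha|^{-1}\,\mathrm{osc}(u|_{\partial Q_\alpha})$, and a Fubini computation in radial coordinates converts $\int_{Q_\alpha}\vp(x,|D\tilde u|)\dx$ into a boundary integral over the skeleton, weighted by $\delta$. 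At this point \eqref{v4} is invoked with $\tx{d}$ calibrated to the $\delta^{-\min\{1,\tx{m}/\beta\}}$ size of $|D\tilde u|$, which allows one to replace $\vp(x_\alpha,\cdot)$ by $\vp(x,\cdot)$ across $Q_\alpha$ up to a negligible additive error; summing over cells and using the $\Delta_2$-type control from \eqref{v3} closes the argument.

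I expect the main obstacle to be precisely this last modular estimate: reconciling the natural scaling of the zero-homogeneous extension with the admissible window in \eqref{v4}, and ensuring that the error terms produced by the $x$-dependence of $\vp$ remain genuinely of lower order as $\delta, \sigma, \varepsilon \to 0$. This is where the exponent $\min\{1, \tx{m}/\beta\}$ in \eqref{v4} plays a decisive role and where the argument deviates most substantively from the classical Orlicz case treated in \cite{CC}.
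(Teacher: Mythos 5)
Your outline correctly identifies the HIMO/Carozza--Cianchi scheme, the decomposition into (i) vanishing web oscillations and (ii) a cell-by-cell modification followed by mollification and nearest-point projection, and the role of \eqref{v4} in the mollification step. But the central device you propose — replacing $w$ on each cell $Q_\alpha$ by its \emph{zero-homogeneous extension} from $\partial Q_\alpha$ — does not survive in the regime of Theorem~\ref{theorem1}, and this is not a technical obstacle but a genuine failure. The zero-homogeneous extension has $|D\tilde u(x)|\sim \mathrm{osc}(u|_{\partial Q_\alpha})\,|x-x_\alpha|^{-1}$, and a change to radial coordinates gives
\begin{equation*}
\int_{Q_\alpha}\varphi\bigl(x,\,c\,|x-x_\alpha|^{-1}\bigr)\,\dx\ \gtrsim\ \int_0^{\delta} r^{\tx{m}-1}\varphi^-_{\mm}(c/r)\,\dr\ =\ c^{\tx{m}}\!\int_{c/\delta}^{\infty}\frac{\varphi^-_{\mm}(t)}{t^{\tx{m}+1}}\,\dt .
\end{equation*}
For $\tx{m}=2$ this is precisely the integral in \eqref{1a}, which is \emph{assumed to diverge}; for $\tx{m}\geq 3$ the divergence is the content of \eqref{1b} via \cite[Lemma 2.3]{Cib}. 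So the zero-homogeneous competitor has infinite $\varphi$-modular energy for \emph{every} center $x_\alpha$ and \emph{every} cell — the conditions \eqref{1a}/\eqref{1b} are engineered exactly at the threshold where the $1/r$ singularity ceases to be integrable. (This also moots your plan to invoke \eqref{v4} on the extension's gradient: since $|D\tilde u|\to\infty$ near $x_\alpha$, it exits the window $[0,\tx{d}\rr^{-\min\{1,\tx{m}/\beta\}}]$ on which \eqref{v4} is valid.) The paper avoids this by a completely different cell modification: a \emph{vertical truncation} $T_\varepsilon w = y_*+(w-y_*)\beta_\varepsilon(|w-y_*|)$ anchored at $y_*\in w(\partial U_h)$, which has the decisive pointwise property $|D(T_\varepsilon w)|\leq |Dw|$ a.e. (see \eqref{Dtrunc}, \eqref{bd}). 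The $\varphi$-energy of the modified map is therefore dominated by that of $w$ rather than created from scratch; strong convergence $\tilde w_\varepsilon\to w$ in $W^{1,\varphi}$ is then obtained from Poincaré, weak compactness, and uniform convexity of $L^\varphi$, after which mollification (where \eqref{v4} actually enters, through \cite{BGS}) and nearest-point projection onto $\nn$ conclude. A secondary difference: the paper builds the web from geodesic spheres chosen by a good-radius Fubini argument (Lemma~\ref{l0}), not a translated cubical grid; this is a cosmetic choice, but it streamlines the oscillation estimate on the skeleton via Theorem~\ref{t1}.
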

\noindent
Assumptions \eqref{1a} and \eqref{1b} are designed to control the admissible growth rates in the $t$-variable. In particular, they allow for functions whose infimum grows slightly slower than $t^\tx{m}$ at infinity. A typical example in the double-phase setting is
\[
\varphi(x,t) = \frac{t^\tx{m}}{\log t} + a(x)\, t^\tx{m},
\]
with $a(\cdot) \ge 0$ Hölder continuous function. More general examples allowed by Theorem \ref{theorem1} are double phase functionals of the type
\[
\varphi(x,t) = \frac{t^p}{\log t} + a(x)\, t^q\quad\text{or}\quad  \varphi(x,t) = t^p + a(x)\, t^q
\]
with $p\geq \tx{m}$, $0\leq a(\cdot)\in C^{0,\alpha}$, and $q$ satisfying \eqref{1pq}.
Variable exponent functionals of the type \eqref{ve}  are also admitted, as long as $\tx{m}\leq p(x)\leq q(x)\leq q$, $0\leq a(\cdot)\in C^{0,\alpha}$, $p(\cdot),q(\cdot)$ are log-H\"older continuous and \eqref{1pq} is satisfied.\\ \\
\noindent On the other hand, the next result shows that, under a suitable topological condition on the target manifold, the density of smooth maps still holds without imposing any relation with the dimension $\tx{m}$ of the domain manifold. Here, we assume that
\eqn{N2}
$$ \nn \text{ is } \tx{k}\text{-connected},$$
with an interplay between $\tx{k}$ and the exponent $\gamma$ from \eqref{v3}. Note that $\tx{k} <  \tx{n}$, otherwise $\nn$ would be contractible.
\begin{theorem} \label{theorem2}
    Let $\mm$ and $\nn$ be Riemannian manifolds  satisfying  \eqref{M1}, \eqref{N1} and \eqref{N2}. Assume that $\vp: \mm \times [0,\infty) \to [0,\infty)$ is a function satisfying \eqref{v1}-$(\ref{v3})_1$, $(\ref{v3})_2$ with
    \begin{equation}\label{cond:gamma}
        \gamma\in (1,\tx{k}+1],
    \end{equation}
 and \eqref{v3.1}, \eqref{v4}. Then, if $\gamma < \tx{k}+1$, $C^{\infty}(\mm,\nn)$ is strongly dense in $W^{1,\varphi}(\mm,\nn)$; if $\gamma=\tx{k}+1$, $C^{\infty}(\mm,\nn)$ is weakly dense in $W^{1,\varphi}(\mm,\nn)$.
\end{theorem}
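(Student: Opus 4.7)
My plan is to adapt the classical cubical approximation of Bethuel, Hang--Lin and Hajłasz (see \cite{B, HL, HJ}) from $W^{1,\gamma}$ to the Musielak--Orlicz setting. The upper-growth condition $(\ref{v3})_2$ together with \eqref{v3.1} yields
\begin{equation*}
\vp(x,t)\;\le\;\tx{c}\bigl(t^{\gamma}+1\bigr)\qquad\text{for every }(x,t)\in\mm\times[0,\infty),
\end{equation*}
and thus, since $\nn$ is compact, $W^{1,\vp}(\mm,\nn)\hookrightarrow W^{1,\gamma}(\mm,\nn)$. The role of the $\tx{k}$-connectedness of $\nn$ will be, as in the classical case, to kill topological obstructions of dimension $\le \tx{k}$, whereas condition \eqref{v4} will allow me to freeze the $x$-variable at the scale of the cubical grid so that the modular is comparable to the $L^{\gamma}$-norm of the gradient.

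After covering $\mm$ with finitely many bi-Lipschitz charts and using a partition of unity, I reduce the problem to the approximation of a single map on a Euclidean ball $\tx{B}_\rr\subset\R^{\tx{m}}$. I triangulate $\tx{B}_\rr$ by a grid of cubes of side $\rr$ and, on each such cube $Q$, I run the standard Bethuel--Hajłasz construction: a slicing/Fubini argument selects a generic $\tx{k}$-skeleton on which the trace of $u$ lies in $W^{1,\gamma}$; since $\pi_j(\nn)=0$ for every $0\le j\le \tx{k}$, I homotope that trace on each $\tx{k}$-cell to a smooth $\nn$-valued map; finally, I extend homogeneously across the dual $(\tx{m}-\tx{k}-1)$-dimensional skeleton $\Sigma_\rr$. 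The resulting smooth map $u_\rr:\tx{B}_\rr\setminus\Sigma_\rr\to\nn$ coincides with $u$ outside a bad set whose measure vanishes as $\rr\to 0$.

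The key pointwise estimate on the extension reads
\begin{equation*}
|Du_\rr(x)|\;\le\;\frac{\tx{c}\,\diam(\nn)}{\dist(x,\Sigma_\rr)},
\end{equation*}
so, combined with the growth bound above,
\begin{equation*}
\int_{\tx{B}_\rr}\vp(x,|Du_\rr|)\,dx\;\lesssim\;\int_{\tx{B}_\rr}\dist(x,\Sigma_\rr)^{-\gamma}\,dx+|\tx{B}_\rr|.
\end{equation*}
A layer-cake computation on the $(\tx{m}-\tx{k}-1)$-dimensional set $\Sigma_\rr$ shows that the right-hand integral is uniformly bounded in $\rr$ exactly when $\gamma\le \tx{k}+1$, and vanishes as $\rr\to 0$ iff $\gamma<\tx{k}+1$. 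This estimate is legitimized by applying \eqref{v4} on each cube $Q$: the threshold $\tx{d}\rr^{-\min\{1,\tx{m}/\beta\}}$ matches the $\rr^{-1}$-blow-up of $|Du_\rr|$ (compatible since $\beta>1$), allowing me to replace $\vp(x,\cdot)$ by the frozen $\vp(x_Q,\cdot)+1$ up to harmless constants. When $\gamma<\tx{k}+1$, a standard equi-integrability argument driven by the a.e.\ convergence $u_\rr\to u$ and the absolute continuity of the modular $\int\vp(\cdot,|Du|)\,dx$ on the vanishing bad set upgrades the bound to modular (hence strong) convergence. When $\gamma=\tx{k}+1$ only a uniform modular bound $\sup_\rr\int\vp(x,|Du_\rr|)\,dx<\infty$ is available, which combined with $u_\rr\to u$ a.e.\ and the reflexivity of $W^{1,\vp}(\mm,\nn)$ (guaranteed by $(\ref{v3})_1$ with $\beta>1$) yields weak density.

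The main difficulty is to reconcile the nonhomogeneous $x$-dependence of $\vp$ with the singular nature of the cubical extension: one must verify that the window of validity of \eqref{v4} accommodates the $\rr^{-1}$-blow-up of $|Du_\rr|$ at every scale of the inductive skeleton filling, and that the bad-set correction (where $u$ has large energy) is absorbed modularly via the absolute continuity of the integral. A further subtlety, intrinsic to the threshold $\gamma=\tx{k}+1$, is that only a uniform energy bound is attainable there, which explains why the statement drops to weak density in parallel with the classical borderline Bethuel $W^{1,\tx{k}+1}$ phenomenon.
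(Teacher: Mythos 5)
Your proposal follows a genuinely different route from the paper. You triangulate the \emph{domain} $\mm$ (Bethuel/Hang--Lin style) and build a skeletal extension, whereas the paper triangulates the \emph{target} $\nn$ and uses Hajłasz's Lipschitz retractions $\eta_\varepsilon:\R^N\to\nn$, which are the identity on a neighborhood $U_\varepsilon T^{\tx{k}}$ of the $\tx{k}$-skeleton of $\nn$ and have Lipschitz constant $\lesssim\varepsilon^{-1}$. The paper's route avoids controlling the trace of $w$ on skeletons of $\mm$ altogether: one selects the retraction so that the pullback $w^{-1}(O_{2\varepsilon}T^{\tx{k}})$ has $\vp$-energy and measure $\lesssim\varepsilon^{\tx{k}+1}$, yielding $\int\vp(x,|D(\eta_\varepsilon\circ w)-Dw|)\lesssim\varepsilon^{-\gamma}\cdot\varepsilon^{\tx{k}+1}=\varepsilon^{\tx{k}+1-\gamma}$, which vanishes for $\gamma<\tx{k}+1$ and stays bounded for $\gamma=\tx{k}+1$; the hypothesis \eqref{v4} is used to replace $w$ by an $\R^N$-valued smooth approximant (via the Bulíček--Gwiazda--Skrzeczkowski convolution argument), not to freeze the $x$-variable in a skeletal estimate.

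There is, however, a genuine gap in your energy estimate. The crude pointwise bound $|Du_\rr|\lesssim\diam(\nn)/\dist(x,\Sigma_\rr)$, followed by your claim that $\int\dist(\cdot,\Sigma_\rr)^{-\gamma}$ is uniformly bounded for $\gamma\le\tx{k}+1$ and vanishes for $\gamma<\tx{k}+1$, is wrong. If the cubical mesh has side $\rr$ and covers a fixed chart, the dual $(\tx{m}-\tx{k}-1)$-skeleton satisfies $\mathscr{H}^{\tx{m}-\tx{k}-1}(\Sigma_\rr)\sim\rr^{-(\tx{k}+1)}$, so the layer-cake computation gives
\begin{equation*}
\int\dist(\cdot,\Sigma_\rr)^{-\gamma}\,\dx\ \sim\ \rr^{-(\tx{k}+1)}\cdot\rr^{\tx{k}+1-\gamma}\ =\ \rr^{-\gamma},
\end{equation*}
which diverges as $\rr\to0$ for every $\gamma>0$. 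The classical Bethuel argument avoids this by replacing $\diam(\nn)$ with the \emph{local oscillation} of the boundary trace on each cube and splitting cubes into ``good'' (small oscillation, so the homogeneous extension has gradient comparable to that of $u$) and ``bad'' (crude bound used, but measure of bad cubes tends to zero). You mention the bad-set correction only as a ``difficulty to be verified'', but the earlier bound as written is not a placeholder: it is false, and the good/bad dichotomy is the missing idea. A secondary issue: near $\Sigma_\rr$ the extension blows up as $\dist(x,\Sigma_\rr)^{-1}\gg\rr^{-1}$, so the threshold $\tx{d}\rr^{-\min\{1,\tx{m}/\beta\}}$ in \eqref{v4} does \emph{not} accommodate $|Du_\rr|$ on the whole cube, and the freezing of the $x$-variable you invoke is not legitimate there.
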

\noindent  
The prototypical example of target manifold fulfilling the assumptions of Theorem \ref{theorem2} is $\nn=\tx{S}^{N-1}$, namely, the $(N-1)$-dimensional sphere in $\R^N$,   which is $(N-2)$-connected.
\\

\noindent
Before proceeding, we make a few observations regarding Theorem \ref{theorem2}, emphasizing its connection with the classical Sobolev case $W^{1,p}$ and the role of topological assumptions.
\begin{remark}
\normalfont
    We observe that the situation of Theorem \ref{theorem2} reflects the one occurring in the classical case $\varphi=t^p$ where, for $p < \tx{m}$, some topological assumptions on $\mm$ and  $\nn$ are necessary for the density result, as shown, for instance, in \cite{B,HJ, BPV, HL, HL1, HL2, Det}. Indeed, in Theorem \ref{theorem2}, $W^{1,\vp}$ can be a function space arbitrarily larger than  $W^{1,\tx{m}}$, so Theorem \ref{theorem2} complements Theorem \ref{theorem1}.
\end{remark}

\noindent
We observe that a direct corollary of the above results is the absence of Lavrentiev phenomenon in any of the two settings addressed.
\begin{corollary} \label{corollary}
Suppose that the hypotheses of either Theorem~\ref{theorem1} or Theorem~\ref{theorem2} are satisfied. Then
    $$ \inf_{W^{1,\vp}(\mm,\nn)} \int_\mm \vp(x,|Du|) \d \mathscr{H}^{\tx{m}} = \inf_{C^\infty(\mm,\nn)} \int_\mm \vp(x,|Du|) \d \mathscr{H}^{\tx{m}}.$$
\end{corollary}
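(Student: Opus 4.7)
The plan is to deduce the corollary as a formal consequence of the density results. One direction, $\inf_{W^{1,\vp}(\mm,\nn)} \int_\mm \vp(x,|Du|)\dx \leq \inf_{C^\infty(\mm,\nn)} \int_\mm \vp(x,|Du|)\dx$, is trivial from the inclusion $C^\infty(\mm,\nn) \subseteq W^{1,\vp}(\mm,\nn)$. The real content lies in the reverse inequality, which I will obtain by producing, for every $u \in W^{1,\vp}(\mm,\nn)$, a sequence $(u_k) \subset C^\infty(\mm,\nn)$ such that
\begin{equation*}
\int_\mm \vp(x,|Du_k|) \dx \ \longrightarrow \ \int_\mm \vp(x,|Du|) \dx.
\end{equation*}
Taking the infimum over the approximants and then over $u$ immediately gives $\inf_{C^\infty} \leq \inf_{W^{1,\vp}}$, closing the argument.

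Under the hypotheses of Theorem \ref{theorem1}, or of Theorem \ref{theorem2} with $\gamma < \tx{k}+1$, I would apply the corresponding strong density statement to get $(u_k) \subset C^\infty(\mm,\nn)$ strongly approximating $u$ in $W^{1,\vp}$. I would then combine the convexity of $\vp(x,\cdot)$ from \eqref{v2} with the $\Delta_2$-type bound implicit in the almost decrease of $t \mapsto \vp(x,t)/t^\gamma$ (cf.~\eqref{v3}) to upgrade modular norm convergence to convergence of the full gradient modulars. This is a standard dominated-convergence-type argument for Musielak--Orlicz Sobolev spaces enjoying a doubling condition, and it yields the required energy convergence.

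In the critical regime $\gamma = \tx{k}+1$ of Theorem \ref{theorem2}, strong density is no longer available, but the weak density statement furnishes a sequence $(u_k) \subset C^\infty(\mm,\nn)$ along which convergence of the modulars nevertheless holds --- this is precisely how weak density is meant in the Lavrentiev context (compare the analogous convention in the classical $W^{1,p}$ theory). In either setting the desired approximating sequence is at hand, and the conclusion follows. The only obstacle I anticipate is not in the corollary itself, which is essentially formal, but in making sure that the constructions of Theorems \ref{theorem1} and \ref{theorem2} actually deliver modular convergence of gradients: for the strong density cases this is automatic from the doubling property, while in the critical case $\gamma = \tx{k}+1$ it has to be read off directly from the construction carried out in the proof of Theorem \ref{theorem2}.
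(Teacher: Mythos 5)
The paper gives no explicit proof of this corollary---it simply declares it a ``straightforward consequence'' of Theorems~\ref{theorem1} and~\ref{theorem2}---so there is no proof to compare against. On the substance of your argument: for the strong density cases (Theorem~\ref{theorem1}, and Theorem~\ref{theorem2} with $\gamma < \tx{k}+1$) your reasoning is correct. Strong $W^{1,\vp}$ convergence plus the $\Delta_2$ property (\eqref{d2}, \eqref{conv2}) yields modular convergence of gradients, and the standard convexity manipulation $\vp(x,|Du_k|) \le (1-\lambda)\vp\bigl(x,\tfrac{|Du|}{1-\lambda}\bigr) + \lambda\vp\bigl(x,\tfrac{|Du_k-Du|}{\lambda}\bigr)$, followed by $k\to\infty$ and then $\lambda\to 0^+$, delivers $\limsup_k\int\vp(x,|Du_k|)\le \int\vp(x,|Du|)$, which is all one needs for $\inf_{C^\infty} \le \inf_{W^{1,\vp}}$.

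The gap is your treatment of the critical case $\gamma = \tx{k}+1$. You assert that the weak density statement of Theorem~\ref{theorem2} ``furnishes a sequence along which convergence of the modulars nevertheless holds'' and that ``this is precisely how weak density is meant in the Lavrentiev context.'' This is not how weak density is defined or proved in this paper. Inspecting the proof of Theorem~\ref{theorem2}, Case~2, one sees that the authors establish only: (i) a uniform bound $\|Dw_\varepsilon\|_{L^\varphi(\mm)}\le c\,(1+\|Dw\|_{L^\varphi(\mm)})$, where the constant $c$ comes from the sub-additivity constant in \eqref{subadd} and the Lipschitz constant of $\eta_\varepsilon$ hitting the set $w^{-1}(O_{2\varepsilon}T^{\tx{k}})$ with the balancing exponent $\varepsilon^{\tx{k}+1-\gamma}=\varepsilon^0=1$; and (ii) weak convergence in $W^{1,\varphi}$ via reflexivity. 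Since $\int_\mm \vp(x,|Du|)\dx$ is convex and hence only weakly \emph{lower} semicontinuous, the available inequality goes the wrong way: $\int\vp(x,|Dw|)\le\liminf_\varepsilon\int\vp(x,|Dw_\varepsilon|)$, and the uniform bound only gives a multiplicative constant $c>1$ on the upper side. Neither yields $\limsup_\varepsilon\int\vp(x,|Dw_\varepsilon|)\le\int\vp(x,|Dw|)$. Consequently your deduction of the corollary in the regime $\gamma=\tx{k}+1$ does not follow from what Theorem~\ref{theorem2} actually proves, and a genuinely different argument (or a restriction of the corollary to the strong density regimes) would be required to close it.
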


\noindent
We end the analysis by explaining that, in this nonautonomous setting where $\varphi$ is described by structural conditions \eqref{v1}-\eqref{v3.1}, the local assumption \eqref{v4} plays a fundamental role. Indeed, when this condition fails, genuine counterexamples arise, even when $W^{1,\vp}$ is a function space smaller than  $W^{1,\tx{m}}$ or when the target manifold is $\tx{k}$-connected; a phenomenon that does not occur in the classical $W^{1,p}$ framework, nor in the $W^{1,A}$ setting considered in \cite{CC}. We focus on the double phase functional \eqref{dph}, and we express failure of
condition \eqref{v4} as 
\eqn{pq}
$$q > p + \alpha \max \left \{ 1,\frac{p-1}{\tx{m}-1} \right \}.$$ 
This condition is sharp for both $p \leq \tx{m}$ and $p >\tx{m}$ 
as highlighted in \cite[Remark 35]{BDS}. Following the fractal constructions of Balci, Diening \& Surnachev \cite{BDS}, we show that when \eqref{pq} holds, Lavrentiev phenomenon occurs and consequently smooth maps are not dense in $W^{1,\vp}(\mm, \nn)$, for $\mm :=[-1,1]^\tx{m} $, i.e. the $\tx{m}$-dimensional cube, and $\nn=\tx{S}^{N-1}_{{\Lambda}}$, where $\Lambda$ represents a suitable radius of the sphere; note that the sphere $\tx{S}^{N-1}_{{\Lambda}}$ is $(N-2)$-connected. This provides the first vectorial counterexample of the Lavrentiev phenomenon, which we further extend to manifold-valued maps. This is the content of the next theorem.
First, let us introduce some notation. We set $Q_\mm=(-1,1)^{\tx{m}}$, and for a given function $u_0\in C^\infty(\overline{Q}_\mm,\tx{S}^{N-1}_\Lambda)$, we define the spaces
\begin{equation}
C^\infty_{u_0}(\overline{Q}_\mm,\tx{S}^{N-1}_\Lambda):=\big\{v\in C^\infty(\overline{Q}_\mm,\tx{S}^{N-1}_\Lambda): \,v=u_0\,\text{on $\partial Q_\mm$}  \big\},   
\end{equation}
and correspondingly
\begin{equation}
    W^{1,\vp}_{u_0}(Q_\mm,\tx{S}^{N-1}_\Lambda):=\big\{v\in  W^{1,\vp}(Q_\mm,\tx{S}^{N-1}_\Lambda): \,v=u_0\,\text{on $\partial Q_\mm$}  \big\}.
\end{equation}
We have the following:
\begin{theorem} \label{tcontr}
 Let $\varphi$ be as in \eqref{dph} and assume that $q,p>1$ and $\alpha\in (0,1]$ are numbers such that \eqref{pq} holds. Then there exist $\Lambda \geq 1$ and functions $a \in C^{\alpha}(\overline{Q}_\mm)$, $u_0 \in C^{\infty}(\overline{Q}_\mm,\tx{S}^{N-1}_{{\Lambda}})$, $\bar{u} \in W^{1,\vp}_{u_0}(Q_\mm, \tx{S}^{N-1}_{{\Lambda}})$ such that
 \eqn{lavrentiev}
$$ \inf_{u \in W^{1,\vp}_{u_0}(Q_\mm,\tx{S}^{N-1}_\Lambda)} \int_{Q_\mm} \varphi(x,|Du|) \d x < \inf_{u\in C^{\infty}_{u_0}(\overline{Q}_\mm,\tx{S}^{N-1}_\Lambda)} \int_{Q_\mm} \varphi(x,|Du|) \d x$$
and there is no sequence $\{u_{\ell}\}_{\ell} \subset C^{\infty}_{u_0}(\overline{Q}_\mm,\tx{S}^{N-1}_{{\Lambda}})$ such that $\int_{Q_\mm} \vp(x,|Du_{\ell} -D\bar{u}|) \d x \to 0$.
\end{theorem}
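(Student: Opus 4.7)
The plan is to reduce Theorem~\ref{tcontr} to the scalar Lavrentiev counterexample of Balci--Diening--Surnachev~\cite{BDS} by means of a smooth isometric embedding of an interval into a great circle of the target sphere.

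\emph{Scalar data and sphere-valued lift.} Assumption~\eqref{pq} is the sharp threshold identified in \cite{BDS}: under it one obtains a coefficient $a\in C^\alpha(\overline{Q_\mm})$ with $a\ge 0$, a smooth scalar boundary datum $w_0\in C^\infty(\overline{Q_\mm})$, and a competitor $\bar w\in W^{1,\vp}_{w_0}(Q_\mm)$ with $\bar w(Q_\mm)\subset [0,L]$ for some $L>0$, such that
\[
I_0:=\int_{Q_\mm}\vp(x,|D\bar w|)\dx\;<\;I_1:=\inf\Bigl\{\int_{Q_\mm}\vp(x,|Dv|)\dx:\,v\in C^\infty_{w_0}(\overline{Q_\mm})\Bigr\}.
\]
Fix any $N\ge 2$ and choose $\Lambda\ge\max(1,L/\pi)$; consider the isometric smooth map $\iota:[-1,L+1]\to\tx{S}^{N-1}_\Lambda$, $\iota(s)=\Lambda(\cos(s/\Lambda),\sin(s/\Lambda),0,\dots,0)$, whose image is a compact embedded arc $\mathcal A\subset \tx{S}^{N-1}_\Lambda$ (notice that $\tx{S}^{N-1}_\Lambda$ is $(N-2)$-connected). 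Set $u_0:=\iota\circ w_0\in C^\infty(\overline{Q_\mm},\tx{S}^{N-1}_\Lambda)$ and $\bar u:=\iota\circ\bar w$; since $\iota$ is an isometric embedding, $|D\bar u|=|D\bar w|$ a.e., hence $\bar u\in W^{1,\vp}_{u_0}(Q_\mm,\tx{S}^{N-1}_\Lambda)$ with modular energy $I_0$.

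\emph{Reduction via a near-isometric smooth extension.} For any prescribed $\e>0$, the tubular-neighborhood theorem together with a standard mollification of the McShane extension produces a smooth map $\Psi:\tx{S}^{N-1}_\Lambda\to\R$ with $\Psi=\iota^{-1}$ on $\mathcal A$ and $\|D\Psi\|_{L^\infty}\le 1+\e$. Given any smooth $v\in C^\infty_{u_0}(\overline{Q_\mm},\tx{S}^{N-1}_\Lambda)$, the composition $\Psi\circ v$ lies in $C^\infty_{w_0}(\overline{Q_\mm})$ and satisfies $|D(\Psi\circ v)|\le (1+\e)|Dv|$; the $\Delta_2$-consequence of the double phase structure then yields $\int\vp(x,|D(\Psi\circ v)|)\dx\le (1+\e)^q\int\vp(x,|Dv|)\dx$. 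Taking infima and using that $\Psi\circ v$ competes in the scalar problem gives $\inf_{C^\infty_{u_0}}\int\vp(x,|Dv|)\dx\ge (1+\e)^{-q}I_1$, so choosing $\e$ so small that $(1+\e)^{-q}I_1>I_0$ and recalling that $\bar u$ is admissible in the left-hand side of~\eqref{lavrentiev} produces the strict inequality. For the non-approximability part, if a sequence $\{u_\ell\}\subset C^\infty_{u_0}(\overline{Q_\mm},\tx{S}^{N-1}_\Lambda)$ satisfied $\int\vp(x,|Du_\ell-D\bar u|)\dx\to 0$, then $w_\ell:=\Psi\circ u_\ell\in C^\infty_{w_0}(\overline{Q_\mm})$ would inherit the same modular convergence to $\bar w$: writing
\[
Dw_\ell-D\bar w=D\Psi(u_\ell)(Du_\ell-D\bar u)+\bigl(D\Psi(u_\ell)-D\Psi(\bar u)\bigr)D\bar u,
\]
the first summand vanishes in the $\vp$-modular by $\Delta_2$, and the second does so by a modular dominated convergence, using that $u_\ell\to\bar u$ a.e.\ along a subsequence (granted by Musielak--Orlicz Poincar\'e--Sobolev, cf.~\cite{HIMO}) together with the uniform continuity of $D\Psi$ on the compact sphere. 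The resulting scalar modular convergence forces $\int\vp(x,|Dw_\ell|)\dx\to I_0<I_1$, contradicting $w_\ell\in C^\infty_{w_0}$.

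\emph{Main obstacle.} The delicate step is the modular dominated-convergence argument for the remainder term $(D\Psi(u_\ell)-D\Psi(\bar u))D\bar u$: $D\bar u$ is only known to belong to the Musielak space $L^\vp$, while $u_\ell\to\bar u$ merely in measure, yet one needs the product to vanish in the $\vp$-modular. This is resolved by combining the $\Delta_2$-character of $\vp$ (automatic from the double phase structure $p,q>1$), compactness of the target sphere (which bounds $|D\Psi(u_\ell)-D\Psi(\bar u)|$ uniformly by $2\|D\Psi\|_\infty$ and ensures pointwise convergence), and modular domination by $C|D\bar u|\in L^\vp$. A secondary routine point is the construction of the smooth $(1+\e)$-Lipschitz extension $\Psi$, carried out by mollifying the $1$-Lipschitz McShane extension of $\iota^{-1}$ on an $\e$-neighborhood of $\mathcal A$; the choice of $\Lambda\ge L/\pi$ is exactly what allows $\iota|_{[-1,L+1]}$ to be an embedding without self-overlap, and any larger value of $\Lambda$ is equally admissible.
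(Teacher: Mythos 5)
The paper and your proposal take genuinely different routes. The paper reproduces the fractal construction of Balci--Diening--Surnachev directly at the level of sphere-valued maps: it builds $u_*^{\tx S}=(u_*,1,\dots,1)/|(u_*,1,\dots,1)|$ explicitly, pairs competitors with the divergence-free field $\tx{b}$ via Fenchel--Young, computes the boundary term $\int\langle D\tilde u^{\tx S},\tx{b}\rangle\dx=(1/4+N-1)^{-1/2}$, and chooses the radius $\Lambda=m_*$ as a large scaling parameter to make the duality estimate dominate the error terms. You instead treat the scalar BDS gap as a black box and reduce to it by isometrically folding the range interval into a great-circle arc $\mathcal A\subset\tx{S}^{N-1}_\Lambda$ and composing vector competitors with a near-isometric retraction $\Psi:\tx{S}^{N-1}_\Lambda\to\R$. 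As a reduction strategy this is reasonable, but it silently shifts the technical burden from the duality computation (which the paper carries out) to the construction of $\Psi$, and that construction is the weak point of the proposal.

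Specifically, the claim that ``a standard mollification of the McShane extension produces a smooth map $\Psi$ with $\Psi=\iota^{-1}$ on $\mathcal A$ and $\|D\Psi\|_{L^\infty}\le 1+\e$'' does not hold as stated: convolution destroys the exact equality $\Psi=\iota^{-1}$ on $\mathcal A$ (and with it the boundary matching $\Psi\circ u_0=w_0$), and a naive cut-off gluing reintroduces gradient terms of size $\sup|\Psi_0-\Psi_\rho|/\delta$ that are not obviously $O(\e)$. A correct construction exists --- use $\iota^{-1}\circ\pi_{\mathcal A}$ in a tubular neighborhood (which is $1$-Lipschitz because a geodesic arc has vanishing geodesic curvature and, for $\Lambda$ large enough, no geodesic shortcuts between points of $\mathcal A$), choose the mollification scale $\rho\ll\delta\e$, and verify that the resulting interpolated map keeps Lipschitz constant $1+O(\delta)+O(\rho/\delta)$ --- but none of this is argued, the role of the angular extent $(L+2)/\Lambda<\pi$ in ruling out shortcuts is not identified (your bound $\Lambda\ge L/\pi$ is off by the shift), and the endpoint non-smoothness of $\pi_{\mathcal A}$ is ignored. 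Since the whole argument rests on $\text{Lip}(\Psi)^q<I_1/I_0$, this step cannot be waved away as routine. A secondary remark: the non-approximability is proved more elaborately than necessary. Once the modular gap $\int_{Q_\mm}\vp(x,|D\bar u|)\dx<\inf_{C^\infty_{u_0}}\int_{Q_\mm}\vp(x,|Dv|)\dx$ is in hand, $\int_{Q_\mm}\vp(x,|Du_\ell-D\bar u|)\dx\to0$ forces $\int_{Q_\mm}\vp(x,|Du_\ell|)\dx\to\int_{Q_\mm}\vp(x,|D\bar u|)\dx$ by $\Delta_2$ and convexity (exactly as in the paper's final paragraph), contradicting that each $u_\ell$ is an admissible smooth competitor; the composition with $\Psi$ and the dominated-convergence discussion of $(D\Psi(u_\ell)-D\Psi(\bar u))D\bar u$ are superfluous.
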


\noindent
The structure of the paper is as follows. Section \ref{sec2} introduces the notation, functional setting, and auxiliary results required for our analysis. Section \ref{sec5} contains the proofs of our main approximation results, namely Theorems \ref{theorem1} and \ref{theorem2}, establishing also the absence of the Lavrentiev phenomenon. Finally, Section \ref{sec4} provides the explicit counterexample that highlights the sharpness of assumptions \eqref{v4}.

\section{Preliminaries} \label{sec2}
\noindent
This section introduces the basic notation, the main properties of the Young function we consider, and the Musielak-Orlicz space we work with. We also collect several useful results and technical lemmas, which follow from \cite{CC}, and will be employed throughout the paper.

\subsection{Notation} \label{notation}
Here we fix the notation used in the sequel. We denote by $\mm \subset \R^d$ and $\nn \subset \R^N$ two manifolds without boundary as in \eqref{M1} and \eqref{N1}. For $\tilde{x} \in \mm$ and $r>0$, we write $\mathbb{B}_r \equiv \mathbb{B}_r(\tilde{x})$ and $\mathbb{S}^{\tx{m}-1}_r(\tilde{x}) \equiv \mathbb{S}^{\tx{m}-1}_r$ to denote respectively the geodesic ball and sphere of radius $r$ centered at $\tilde{x}$. Likewise, $\tx{B}_r \equiv \tx{B}_r(\bar{x})$ and $\tx{S}^{\tx{m}-1}_r(\bar{x}) \equiv \tx{S}^{\tx{m}-1}_r$ stand for the Euclidean ball and sphere of radius $r$ centered at $\bar{x} \in \R^{\tx{m}}$. Unless stated otherwise, all balls and spheres are assumed to share the same center. The symbol $\nabla_\mathbb{S}$ stands for the gradient on $\mathbb{S}^{\tx{m}-1}_r$. We use $c$ to denote a generic positive constant, which can change from line to line; whenever relevant, its dependencies will be explicitly specified. In the estimates below, any dependence on geometric features $\nn$, such as the $L^\infty$-norm of maps into $\nn$ (note that $\nn$-valued maps have finite $L^\infty$-norm because of the compactness of $\nn$), will be indicated simply as $c(\nn)$. Similarly, dependencies on geometric features of $\mm$ will be denoted by $c(\mm)$. With  \(R_\mm\) we denote a positive number such that for any point \(x\) in \(\mm\), all geodesic spheres centered at \(x\) with a radius smaller than \(R_\mm\) are contained in some coordinate chart from a reference atlas. Furthermore, for any \(x\), a system of geodesic spherical coordinates centered at $x\in \mm$ is well defined for $r \in (0,R_\mm)$. We use symbols "$\lesssim$" with subscripts, to indicate that a certain inequality holds up to constants whose relevant dependencies are marked in the suffix.  We conclude by introducing the definition of $j$-connectedness for manifolds, which will play an important role in the proof of Theorem \ref{theorem2}.
\begin{definition}[$j$-connected manifolds]\label{j connected}
    Given an integer $j \geq 0$, a manifold $\tilde{\mm}$ is said to be $j$-connected if its
first $j$ homotopy groups are trivial, that is $\pi_0(\tilde{\mm}) = \pi_1(\tilde{\mm}) = \dots= \pi_{j-1}(\tilde{\mm})=\pi_j(\tilde{\mm})= 0$.
\end{definition}
\subsection{Properties of Young functions} \label{2.2}
In this section we recall the notion of a Young function and present several properties of 
$\vp$ defining our Musielak-Orlicz space. A map \( A: [0, \infty) \to [0, \infty) \) is  a \textit{Young function}  if it is convex, non constant and vanishes only at $0$.

Accordingly, a mapping \( \varphi: \mm \times [0, \infty) \to [0, \infty) \) is called a {generalized Young function} provided it satisfies the following two conditions:
\begin{itemize}
    \item For every \( x \in \mm \), the function \( t \mapsto \varphi(x, t) \) is a Young function,
    \item For every \( t \ge 0 \), the function \( x \mapsto \varphi(x, t) \) is measurable.
\end{itemize}
\noindent Next, the Young conjugate $A^* \colon [0,\infty) \to [0,\infty)$  of $A$ is given by
\begin{equation}\label{young:conj}
    A^*(\eta) = \sup_{\xi \geq 0} \big(\xi \eta - A(\xi)\big).
\end{equation}  
The second convex conjugate $A^{**}$ is defined as
\begin{equation}\label{sec:yconj}
    A^{**}(\xi) = \sup_{\eta \geq 0} \big(\xi \eta - A^*(\eta)\big).
\end{equation} 
It is also well known that $A^{**}(\cdot)$ is the greatest convex minorant of $A(\cdot)$. Analogously, the Young conjugate function $\vp^*: \mm \times [0,\infty) \to [0,\infty)$ of $\vp(x,t)$ is the Young function defined by
$${\vp}^*(x,t)= \sup_{s \geq 0}\{st-\vp(x,s) \}, \quad \text{for every } x\in \mm .$$

\noindent For what concerns the growth conditions, we say that $b: [0,\infty) \to [0,\infty)$ is \textit{almost increasing} or \textit{almost decreasing} if there exists $c \geq 1$ such 
that for any $0 < t < s < \infty$, it holds $b(t)\leq cb(s)$ and $b(s) \leq cb(t)$, respectively. \\

\noindent Note that, for a function $\vp$ satisfying the almost monotonicity $(\ref{v3})_1$ with a constant $c \geq 1$, for all $0<m_1 \leq 1  \leq m_2 < \infty$, it holds
\eqn{du1}
$$ \vp(x,m_1t) \leq c\,{m_1}^{\beta}\vp(x,t) \quad \text{and} \quad c^{-1} m_2^\beta \vp(x,t) \leq \vp(x,m_2 t), \quad \text{for all } (x,t) \in \mm \times [0,\infty).$$
Analogously, if $\vp$ satisfies $(\ref{v3})_2$ with a constant $c \geq 1$, then
\eqn{du}
$$ c^{-1} m_1^\gamma \vp(x,t) \leq \vp(x,m_1 t) \quad \text{and} \quad \vp(x,m_2 t) \leq c m_2^{\gamma} \vp(x,t), \quad \text{for all } (x,t) \in \mm \times [0,\infty).$$
Thanks to the monotonicity of $t\mapsto \vp(x,t)$ and \eqref{du1}, it  is immediate to verify that $\vp$ satisfies the sub-additivity property
\begin{equation}\label{subadd}
    \vp(x,t+s)\leq c\,\big(\vp(x,t)+\vp(x,s)\big),\quad\text{for all $x\in \mm$  and for all $t,s\geq 0$,}
\end{equation}
for some constant $c\geq 1$. Moreover, by \eqref{v3}-\eqref{v3.1}, we have that
\begin{equation}\label{control:vp}
  c\,\min\{t^\beta,t^\gamma\}  \leq \vp(x,t)\leq C\,\max\{t^\beta,t^\gamma\}, \quad \text{for all } (x,t) \in \mm \times [0,\infty),
\end{equation}
for some constants $0 <c\leq 1 \leq C$. Next, we say that $\vp$ satisfies the so-called $\Delta_2$-condition if there exists a constant $c$ such that
\eqn{d1}
$$ \vp(x,2t) \leq c\vp(x,t),  \quad \text{for all } (x,t) \in \mm \times [0,\infty),$$
and that $\varphi$ satisfies the $\nabla_2$-condition if its Young conjugate $\varphi^*$ fulfills the $\Delta_2$-condition. Observe that, by \eqref{du1}-\eqref{du}, we have that
\eqn{d2}
$$ \text{if $\vp$ satisfies \eqref{v3}, then it satisfies both $\Delta_2$- and $\nabla_2$-condition.}$$ Whenever $\vp$ satisfies the $\Delta_2$ or $\nabla_2$ condition, we write $\vp\in \Delta_2$ and $\vp\in \nabla_2$ respectively.\\

\noindent Now, let $\vp_{\mm}^{-}$ be given by \eqref{def:inf}. Since it is an infimum of convex (hence continuous) functions, then $\vp_\mm^-(\cdot)$ is lower-semicontinuous. Moreover, by taking the infimum over $x\in \mm$ in \eqref{v3}, we infer that 
\begin{equation}\label{inf:monot}
      t \mapsto  \frac{\vp_\mm^-(t)}{t^\beta}\text{ is almost increasing},\quad  t \mapsto  \frac{\vp_\mm^-(t)}{t^\gamma} \text{ is almost decreasing.}
\end{equation}
Let us now denote  by $\Psi:[0,\infty)\to [0,\infty)$ the greatest convex minorant of $\vp_\mm^-$, that  is $\Psi=(\vp_\mm^-)^{**}$.
We claim that there exists $C\geq 1$ such that
\begin{equation}\label{equiv}
    \Psi(t)\leq \vp_\mm^-(t)\leq C\,\Psi(2t)\quad\text{for all $t\geq 0$.}
\end{equation}
The left inequality follows from the definition of $\Psi$. Next observe that from \eqref{inf:monot}, we get that $t \mapsto \vp_{\mm}^{-}(t)/t$ is almost increasing, that is  $\vp_{\mm}^{-}(t) \geq c\, \frac{t}{s}\vp_{\mm}^{-}(s)$ for $t >s$, for some constant $c \in (0,1)$. It immediately follows that, for $s>0$ fixed, the function $t \mapsto c\,\left( \frac{t}{s}-1\right ) \vp_{\mm}^{-}(s)$ is a convex minorant of $\vp_{\mm}^{-}$ on $[0,\infty)$, hence $\Psi(t) \geq c\,\left (\frac{t}{s}-1 \right )\vp_{\mm}^{-}(s)$. Thereby choosing $s=t/2$, we obtain \eqref{equiv} with $C=c^{-1}$. 

The advantage of working with $\Psi$ in place of $\vp_\mm^-$ is that $\Psi$ is a convex function. Additionally, thanks to \eqref{equiv}, $\Psi$ satisfies the same monotonicity properties \eqref{inf:monot} of $\vp_\mm^-$, i.e.
\begin{equation}\label{inf:monot1}
      t \mapsto  \frac{\Psi(t)}{t^\beta}\text{ is almost increasing, }\quad  t \mapsto  \frac{\Psi(t)}{t^\gamma} \text{ is almost decreasing.}
\end{equation}
In particular,  $\Psi$ is a Young function satisfying the $\Delta_2$- and $\nabla_2$- conditions. Furthermore, whenever \eqref{1a} and \eqref{1b} are valid, they also hold true with $\Psi$ in place of $\vp_\mm^-$ thanks to \eqref{equiv}, that is
  \eqn{1aa}  $$ \tx{m}=2 \text{ and } \int^{\infty} \frac{\Psi(t)}{t^3} \dt  = \infty, $$
  or
   \eqn{1bb} $$ \tx{m} \geq 3 \text{ and } \int^{\infty} \left (\frac{t}{\Psi(t)} \right )^{\frac{2}{\tx{m}-2}} \left ( \int_{t}^{\infty} \left (\frac{s}{\Psi(s)} \right )^{\frac{1}{\tx{m}-2}} \d s\right )^{-\tx{m}} \dt = \infty.$$
For such $\Psi$, we also define the auxiliary Young function  $\Psi_{\tx{m}-1}$ as
\begin{equation}\label{def:psim}
\Psi_{\tx{m}-1}(t) =
\begin{cases}
\Psi(t) & \text{if } \tx{m} = 2, \\[4pt]
\displaystyle \left( t^{\frac{\tx{m}-1}{\tx{m}-2}} \int_t^\infty \frac{{\Psi}^*(r)}{r^{1 + \frac{\tx{m}-1}{\tx{m}-2}}} \, \d r \right)^{*} & \text{if } \tx{m} \geq 3,
\end{cases}
\end{equation}
for $t \in [0,\infty)$. Here, $(\cdot)^{*}$ denotes the Young conjugate as in \eqref{young:conj}. 

\noindent
\subsection{Musielak-Orlicz spaces}
Here we introduce the Musielak-Orlicz space and provide the basic definitions and properties that will be used throughout the paper. We start considering an open subset $\Omega_\mm \subseteq \mm$, with $\mm$ as in \eqref{M1},  and a Young function $\varphi$. We denote by $L^{0}(\Omega_\mm,\R^N)$ the set of measurable functions on $\Omega_\mm$, equipped with the measure introduced by the Riemannian metric on $\mm$, which agrees with the Hausdorff measure $\mathscr{H}^\tx{m}$.
The Musielak-Orlicz space $L^{\varphi}(\Omega_\mm,\R^N)$ is defined as
$$ L^{\varphi}(\Omega_\mm,\R^N):= \left \{ u \in L^{0}(\Omega_\mm,\R^N) : \int_{\Omega_\mm} \vp \left (x, \frac{|u(x)|}{\lambda} \right ) \d\mathscr{H}^{\tx{m}} < \infty, \text{ for some } \lambda > 0\right \},$$
and we endow such space with the so-called Luxemburg norm
$$ \nr{u}_{L^\vp(\Omega_\mm)}:= \inf \left \{ \lambda > 0: \int_{\Omega_\mm} \vp \left (x,\frac{|u(x)|}{\lambda} \right ) \d \mathscr{H}^{\tx{m}}\leq 1\right \}.$$
Moreover, if $\vp(x, \cdot) \in\Delta_2$, then
$$ L^{\varphi}(\Omega_\mm,\R^N):= \left \{ u \in L^{0}(\Omega_\mm,\R^N) : \int_{\Omega_\mm} \vp \left (x, k|u(x)| \right ) \d\mathscr{H}^{\tx{m}} < \infty, \text{ for all } k > 0\right \}$$
and given a sequence $\{u_{\ell}\}_\ell \in L^\vp(\Omega_\mm,\R^N)$ and $u \in L^\vp(\Omega_\mm,\R^N)$, there holds the convergence equivalence (see \cite[Lemma 3.3.1]{HHbook})
\eqn{conv2}
$$ \lim_{\ell\to \infty}\nr{u_\ell -u}_{L^\vp(\Omega_\mm)}= 0 \ \text{ if and only if } \ \lim_{{\ell} \to \infty} \int_{\Omega_\mm} \vp(x,|u_{\ell}-u|) \d\mathscr{H}^{\tx{m}} =0.$$
We next introduce the corresponding Musielak-Orlicz Sobolev space
$$ W^{1,\vp}(\Omega_\mm, \R^N):= \left \{ u \in W^{1,1}(\Omega_\mm,\R^N) : |Du| \in L^\vp(\Omega_\mm, \R^N)\right \},$$
which is endowed with the norm
$$ \nr{u}_{W^{1,\vp}(\Omega_\mm)} := \nr{u}_{L^{1}(\Omega_\mm)} + \nr{Du}_{L^\vp(\Omega_\mm)}.$$
To describe functions vanishing outside $\Omega_\mm$, we define the space
$$ W^{1,\vp}_0(\Omega_\mm,\R^N):= \left \{  u \in W^{1,\vp}(\Omega_\mm,\R^N): \text{ the extension to $\mm \setminus \Omega_\mm$ of $u$ by $0$ belongs to } W^{1,\vp}(\mm,\R^N)\right \}.$$
Finally, when $\nn$ is a submanifold of $\R^N$ as in \eqref{N1}, we set
$$ W^{1,\vp}(\Omega_\mm,\nn):= \left \{ u \in W^{1,\vp}(\Omega_\mm,\R^N): \text{Im}(u) \subset \nn \text{ holds almost everywhere} \right \}.$$

\subsection{The double of the manifold \texorpdfstring{$\mm$}{M}.} Let $\mm$ be an oriented, compact Riemannian $\tx{m}$-manifold with smooth boundary $\partial \mm\neq \emptyset$. We can consider its double denoted by $\mathcal{D}(\mm)$ as follows: we consider two copies of $\mm$, denoted by $\mm_1,\mm_2$ respectively, and we attach them along the boundary, so that $\partial\mm_1=\partial\mm_2=\partial \mm$. The precise construction can be found in \cite[Theorem 9.29, Example 9.32]{Lee}. 

In particular, there exist two diffeomorphisms $\mathcal{i
}_1:\mm\to \mm_1$, $\mathcal{i_2}:\mm\to \mm_2$ (often called \textit{identification maps} between $\mm$ and its $i$-th copy $\mm_i$), and we have

\begin{equation}\label{double1}
    \mathcal{D}(\mm)=\mm_1\cup \mm_2,\quad \text{with}\quad  \mm_1\cap \mm_2= \mathcal{i}_1(\partial\mm)=\partial \mm_1 =\mathcal{i}_2(\partial\mm)=\partial \mm_2=\partial \mm
\end{equation}
where the last equality is just formal and has to be intended in the sense of diffeomorphisms.
This means that $\mathcal{D}(\mm)$ is made of the two copies of $\mm$, whose interiors are disjoint, and such copies intersect only at the hypersurface $\partial\mm$. Hence, by identifying $\mm$ with one of its copies, say $\mm_1$, we may effectively consider $\mm\subset \mathcal{D}(\mm)$ as a submanifold.

Moreover, if $\mm$ is a compact, oriented Riemannian $\tx{m}$-dimensional manifold, so is the double $\mathcal{D}(\mm)$. The key feature is that
\begin{equation}\label{double}
    \partial\mathcal{D}(\mm)=\emptyset,\quad \text{and thus}\quad \partial\mm\subset \mathcal{D}(\mm)=\mathrm{Int}\,\mathcal{D}(\mm)
\end{equation}
where $\mathrm{Int}$ denotes the interior of the manifold. Therefore, we  have that $\partial\mm$ is a $(\tx{m}-1)$-dimensional submanifold embedded into $\mathcal{D}(\mm)$. 

Let us now consider a $\mathscr{H}^{\tx{m}}$-measurable function $v:\mm\to \R^N$, $N\in \N$; its extension to the double $\mathcal{D}(\mm)$ is immediately defined by setting
\begin{equation}\label{double:ext}
    \tilde{v}:\mathcal{D}(\mm)\to \R^N,\quad \tilde v|_{\mm_1}:=v\circ \mathcal{i}_1,\quad \tilde v|_{\mm_2}:=v\circ \mathcal{i}_2.
\end{equation}
so that  $\tilde{v}|_\mm=v$, modulo identifying $\mm$ with $\mm_i$.  It immediately follows by definition that the images coindice $\mathrm{Im}(v)=\mathrm{Im}(\tilde v)$, and if $v$ is real-valued
\begin{equation}\label{sup:double}
    \sup_{\mm} v=\sup_{\mathcal{D}(\mm)} \tilde v,\quad  \inf_{\mm} v=\inf_{\mathcal{D}(\mm)} \tilde v\quad\text{and hence}\quad \operatorname*{osc}_\mm v=\operatorname*{osc}_{\mathcal{D}(\mm)}\tilde v.
\end{equation}
Moreover, if $v\in W^{1,1}(\mm,\R^N)$, then its extension $\tilde{v}\in W^{1,1}(\mathcal{D}(\mm),\R^N)$. To see this, first recall the following basic fact from the theory of Sobolev spaces: given two functions $w_1\in W^{1,1}(\Omega_1),w_2\in W^{1,1}(\Omega_2)$ on two regular domains sets $\Omega_1,\Omega_2\subset \mathbb{R}^{\tx{m}}$, such that $\Omega_1\cap \Omega_2=\Gamma$ is a regular hypersurface, then the function
\begin{equation*}
w(x)=
    \begin{cases}
        w_1(x)\quad x\in \Omega_1
        \\
        w_2(x)\quad x\in \Omega_2
    \end{cases}
\end{equation*}
belongs to the Sobolev spaces $W^{1,1}(\Omega_1\cup \Omega_2)$ if and only if the traces $w_1|_\Gamma=w_2|_{\Gamma}$ coincide.

Hence, if $v\in W^{1,1}(\mm,\R^N)$, then by \eqref{double:ext} we have $v_i:=\tilde{v}|_{\mm_i}\in W^{1,1}(\mm_i,\R^N)$ for each $i=1,2$, and $v_1=v_2$ on $\mm_1\cap \mm_2=\partial \mm$, so by \eqref{double} and from the preceding discussion (applied on local charts of $\mathcal{D}(\mm)$) we have $\tilde{v}\in W^{1,1}(\mathcal{D}(\mm),\R^N)$.  

Now, let  $\varphi:\mm\times [0,\infty)\to [0,\infty)$ be a Young function as in \eqref{v1}-\eqref{v4}, and define its extension to the double
\begin{equation}\label{double:young}
    \tilde{\varphi}:\mathcal{D}(\mm)\times [0,\infty)\to [0,\infty),\quad \tilde \varphi(\cdot,t)\text{ is the extension \eqref{double:ext} of $\varphi(\cdot,t)$ to $\mathcal{D}(\mm)$  for every $t\geq 0$.}
\end{equation}
It immediately follows from \eqref{double:ext}-\eqref{sup:double} that $\tilde\varphi$ still satisfies \eqref{v1}-\eqref{v4}, and 
\begin{equation}\label{same:inf}
    \tilde{\varphi}_{\mathcal{D}(\mm)}^-(t)=\inf_{x\in \mathcal{D}(\mm)} \varphi(x,t)=\varphi^-_\mm(t)\quad \text{for every $t\geq 0$.}
\end{equation}

Next, let $v\in W^{1,\varphi}(\mm,\R^N)$, and let $\tilde v$ and $\tilde\varphi$ be as in \eqref{double:ext}-\eqref{double:young}. We claim that $\tilde v\in W^{1,\tilde \varphi}(\mathcal{D}(\mm),\R^N) $. Indeed, we saw that $\tilde v\in W^{1,1}(\mathcal{D}(\mm),\R^N)$, and using that $\mm=\mm_j$ (modulo diffeomorphism $\mathcal{i}_j$, $j=1,2$), together with  \eqref{control:vp} and  \eqref{double1}, we deduce
\begin{equation*}
    \int_{\mathcal{D}(\mm)} \tilde{\varphi}(x,|D\tilde v|)\d\mathscr{H}^{\tx{m}}=\int_{\mm_1} \tilde{\varphi}(x,|D\tilde v|)\d\mathscr{H}^{\tx{m}}+\int_{\mm_2} \tilde{\varphi}(x,|D\tilde v|)\d\mathscr{H}^{\tx{m}}\leq C\,\int_{\mm}\varphi(x,|Dv|)\d\mathscr{H}^{\tx{m}}<\infty
\end{equation*}
which shows that $\tilde v\in W^{1,\varphi}(\mathcal{D}(\mm),\nn)$, and 
\begin{equation*}
    \int_{\mathcal{D}(\mm)} \tilde{\varphi}(x,|D\tilde v|)\d\mathscr{H}^{\tx{m}}\geq \int_{\mm_1} \tilde{\varphi}(x,|D\tilde v|)\d\mathscr{H}^{\tx{m}}\geq c\,\int_{\mm}\varphi(x,|Dv|)\d\mathscr{H}^{\tx{m}}.
\end{equation*}
 We sum up what we have just found in the following
\begin{lemma}[Sobolev extension to the double $\mathcal{D}(\mm)$]\label{lemma:est}
    Let $\mm$ be an oriented, compact $\tx{m}$-dimensional Riemannian manifold, with smooth boundary $\partial \mm\neq \emptyset$, and let $\varphi$ be a Young function fulfilling \eqref{v1}-\eqref{v4}.
   Then, if $v\in W^{1,\varphi}(\mm,\R^N)$, its extension $\tilde v$ to the double given by \eqref{double:ext}
   belongs to $W^{1,\tilde\varphi}(\mathcal{D}(\mm),\mathrm{Im}(v))$, with
\begin{equation}
    c\,\|v\|
    _{W^{1,\varphi}(\mm,\R^N)}\leq \|\tilde v\|_{W^{1,\tilde\varphi}(\mathcal{D}(\mm),\R^N)}\leq C\|v\|
    _{W^{1,\varphi}(\mm,\R^N)},
\end{equation}
for some constants $c,C=c,C(\mm,N,\beta,\gamma)>0$, where $\tilde\varphi$ is given by \eqref{double:young} and satisfies the structural assumptions \eqref{v1}-\eqref{v4}. 
\end{lemma}

\begin{remark}\label{remark:bdry}\rm{
   In view of Lemma \ref{lemma:est} and \eqref{same:inf}, it suffices to prove Theorems \ref{theorem1}-\ref{theorem2} in the case $\partial \mm=\emptyset$. Indeed, assume that both  theorems hold for $\varphi$ and  when the reference manifold $\mm$ has no boundary.

  Then, if $\partial \mm\neq \emptyset$, let   $v\in W^{1,\varphi}(\mm,\nn)$ and  $\tilde v\in W^{1,\tilde\varphi}(\mathcal{D}(\mm),\nn)$ be its extension of Lemma \ref{lemma:est}. Since $\tilde{\varphi}$ still satisfies the assumptions of Theorems \ref{theorem1} (by \eqref{same:inf}) and \ref{theorem2}, $\partial \mathcal{D}(\mm)=\emptyset$, $\mathcal{D}(\mm)$ is a $\tx{m}$-dimensional oriented compact Riemannian manifold, then by applying Theorems \ref{theorem1}-\ref{theorem2} in the boundary-less case, we may find a sequence $v_\ell \in C^{\infty}(\mathcal{D}(\mm), \nn)$ such that $v_\ell\to \tilde v$ in $W^{1,\tilde{\varphi}}(\mathcal{D}(\mm),\nn)$. Therefore, $v_\ell|_{\mm}\xrightarrow{\ell\to\infty} v $ in $W^{1,\varphi}(\mm,\nn)$ owing to the identification $\mm=\mm_1\subset \mathcal{D}(\mm)$.}
\end{remark}

\subsection{Vanishing web oscillations}
Here we recall the notion of vanishing web oscillations, which will be instrumental in the proof of Theorem \ref{theorem1}.
We recall that a set $F\subset \mm$ is a \textit{web} if it is compact, of Lebesgue measure zero, and its complement
$\displaystyle \mm \setminus F=\cup_{i=1}^{\bar{\tx{m}}}U_i$ consists of  a finite number of components $U_i$ that are disjoint open connected sets.

\begin{definition}[Vanishing web oscillations \cite{HIMO}]
Let \(\mm\) be an oriented, compact Riemannian manifold, $\partial\mm=\emptyset$, and let  \(w\in W^{1,\varphi}(\mm,\mathbb{R}^N)\).
We say that \(w\) has vanishing web oscillations if for every \(\varepsilon>0\) there exists a web \(F\subset \mm\) such that:
\begin{itemize}
  \item \(\mathrm{fine\text{-}diam}(F):=\max_{i=1,\dots,\bar{\tx{m}}}\operatorname{diam}U_i \le \varepsilon;\)
  \item there exists \(\eta\in W^{1,\varphi}(\mm,\mathbb{R}^N)\cap C^0(\mm,\mathbb{R}^N)\) with
        \(w-\eta\in W^{1,\varphi}_0(U_i,\mathbb{R}^N)\) for every  \(U_i, \ i=1,\dots, \bar{\tx{m}}\). Equivalently \(w\in\eta+W^{1,\varphi}_0(\mm\setminus F,\mathbb{R}^N)\);
  \item for every \(U_i, \ i=1,\dots,\bar{\tx{m}}\), the boundary oscillation
  \[
    \operatorname{osc}(w,\partial U_i)=\max\big\{|\eta(x)-\eta(y)|: x,y\in\partial U_i\big\}
  \]
  satisfies \(\operatorname{osc}(w,\partial U_i)<\varepsilon\).
\end{itemize}
\end{definition}
\noindent
\subsection{A few useful results}
In this section, we collect some auxiliary results that will be used throughout the paper. We begin with a preliminary remark to clarify the relation between our assumptions and the general framework considered in \cite{BGS}.
\begin{remark} \label{rem1}
\normalfont
We point out that assumptions \eqref{v1}-\eqref{v3.1} and \eqref{v4} imply respectively to assumptions 2.1 and 2.2 in \cite{BGS}. In particular, \eqref{v3} and \eqref{v3.1} ensure that (A3) and (A5) are satisfied and \eqref{d2} implies (A4). Thus, our setting fits into the general framework of \cite{BGS}.
\end{remark}
\noindent
For what follows, we recall that the definition of $R_\mm$ is given in Section \ref{notation}, while $\Psi$ and  $\Psi_{\tx{m}-1}$ and their relevant properties are introduced in Section \ref{2.2}. The next result, derived from \cite[Theorem 4.1]{CC}, will serve as a key tool in the proof of Theorem~\ref{theorem1}. 

\begin{theorem} \label{t1}
Let $\mm$ be a Riemannian manifold as in \eqref{M1}, with $\partial \mm=\emptyset$. Let $\vp(x,\cdot)$ be a Young function satisfying \eqref{v1}-\eqref{v3.1}. Assume that either $\tx{m} = 2$, or $\tx{m} \geq 3$ and
\eqn{ip}
$$
\int^\infty \left( \frac{r}{\vp^-_\mm(r)} \right)^{\frac{1}{\tx{m}-2}} \, \d r < \infty
$$
holds, with $\vp_\mm^-$ given by \eqref{def:inf}. Then, there exists a constant $\tilde c \equiv \tilde c(\mm, N)$ such that, if $\tilde{x} \in \mm$ and $r \in (0, R_{\mm})$, it holds
\eqn{4.3}
$$
\text{osc}_{\mathbb{S}^{\tx{m}-1}_r(\tilde{x})} \psi \leq \tilde c \,r \, \Psi_{\tx{m}-1}^{-1} \left( \mint_{\mathbb{S}_r^{\tx{m}-1}(\tilde{x})} \vp(y,|\nabla_\mathbb{S} \psi|) \, \d\mathscr{H}^{\tx{m}-1} \right),
$$
for every sphere $\mathbb{S}_r^{\tx{m}-1}(\tilde{x}) \subseteq \mm$ and any  weakly differentiable function $\psi : \mathbb{S}^{\tx{m}-1}_r(\tilde{x}) \to \mathbb{R}^N$ such that the right hand side of \eqref{4.3} is finite. 

\end{theorem}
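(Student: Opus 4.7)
The plan is to transfer \cite[Theorem~4.1]{CC}, stated there for an autonomous Young function, to the present Musielak--Orlicz setting by passing to the greatest convex minorant $\Psi=(\vp_\mm^-)^{**}$ of the fiberwise infimum $\vp_\mm^-$ introduced in Section~\ref{2.2}. Once \cite[Theorem~4.1]{CC} is applied to $\Psi$ on the sphere $\mathbb{S}_r^{\tx{m}-1}(\tilde x)$, the pointwise bound $\Psi\le \vp_\mm^-\le \vp(y,\cdot)$ upgrades the integrand in the resulting sharp oscillation estimate from $\Psi(|\nabla_\mathbb{S}\psi|)$ to $\vp(y,|\nabla_\mathbb{S}\psi|)$.

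First I verify that $\Psi$ is admissible in the Carozza--Cianchi framework. By construction $\Psi$ is a convex Young function depending only on $t$, and, as recorded in \eqref{equiv}--\eqref{inf:monot1}, it inherits from \eqref{v3} (via \eqref{equiv}) the almost monotonicities of $t\mapsto \Psi(t)/t^\beta$ and $t\mapsto \Psi(t)/t^\gamma$. Consequently $\Psi\in\Delta_2\cap\nabla_2$ with constants depending only on $\beta,\gamma$. When $\tx{m}\ge 3$, the two-sided equivalence \eqref{equiv} transfers the integrability condition \eqref{ip} from $\vp_\mm^-$ to $\Psi$:
\begin{equation*}
\int^\infty\!\left(\frac{r}{\Psi(r)}\right)^{\!\frac{1}{\tx{m}-2}}\d r\;\lesssim\;\int^\infty\!\left(\frac{r}{\vp_\mm^-(r)}\right)^{\!\frac{1}{\tx{m}-2}}\d r<\infty,
\end{equation*}
so that the auxiliary Young function $\Psi_{\tx{m}-1}$ defined in \eqref{def:psim} is well posed. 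For $\tx{m}=2$ one simply has $\Psi_{\tx{m}-1}=\Psi$ and no integral condition is required.

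Next, for $\tilde x\in\mm$ and $r\in(0,R_\mm)$, the geodesic sphere $\mathbb{S}_r^{\tx{m}-1}(\tilde x)$ lies inside a single chart of the reference atlas by the very choice of $R_\mm$ (cf. Section~\ref{notation}), hence it is bilipschitz equivalent to a Euclidean sphere with constants depending only on $\mm$. On such a sphere, \cite[Theorem~4.1]{CC} applied to $\Psi$ yields, for every weakly differentiable $\psi$ for which the average below is finite,
\begin{equation*}
\osc_{\mathbb{S}_r^{\tx{m}-1}(\tilde x)}\psi\;\le\; c\,r\,\Psi_{\tx{m}-1}^{-1}\!\left(\mint_{\mathbb{S}_r^{\tx{m}-1}(\tilde x)}\Psi(|\nabla_\mathbb{S}\psi|)\,\d\mathscr{H}^{\tx{m}-1}\right),
\end{equation*}
with $c=c(\mm,N)$.

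To conclude, the pointwise inequality $\Psi(t)\le \vp_\mm^-(t)\le \vp(y,t)$, valid for every $y\in \mm$ and $t\ge 0$, gives
\begin{equation*}
\mint_{\mathbb{S}_r^{\tx{m}-1}(\tilde x)}\Psi(|\nabla_\mathbb{S}\psi|)\,\d\mathscr{H}^{\tx{m}-1}\;\le\;\mint_{\mathbb{S}_r^{\tx{m}-1}(\tilde x)}\vp(y,|\nabla_\mathbb{S}\psi|)\,\d\mathscr{H}^{\tx{m}-1},
\end{equation*}
and the monotonicity of $\Psi_{\tx{m}-1}^{-1}$ yields \eqref{4.3}. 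The main technical obstacle lies in the preliminary check that $\Psi$ and its companion $\Psi_{\tx{m}-1}$ inherit from $\vp$ every structural hypothesis needed by \cite[Theorem~4.1]{CC}; once this is settled, the Musielak--Orlicz oscillation estimate reduces to the autonomous one by mere domination.
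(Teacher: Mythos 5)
Your proposal is correct and takes essentially the same approach as the paper: replace $\vp^-_\mm$ by its greatest convex minorant $\Psi=(\vp_\mm^-)^{**}$, invoke \cite[Theorem~4.1]{CC} for $\Psi$ on the geodesic sphere, and conclude by the domination $\Psi\le\vp_\mm^-\le\vp(y,\cdot)$ together with the monotonicity of $\Psi_{\tx{m}-1}^{-1}$. The additional verification you carry out that $\Psi$ inherits the $\Delta_2$/$\nabla_2$ structure and the integrability condition from \eqref{equiv} is a more explicit spelling-out of the same checks the paper folds into a single sentence.
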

\begin{proof}
Owing to \eqref{ip} and \eqref{equiv}, the Young function $\Psi=(\vp_\mm^-)^{**}$ satisfies the assumptions of Theorem 4.1  in \cite{CC}, therefore
$$
\text{osc}_{\mathbb{S}^{\tx{m}-1}_r(\tilde{x})} \psi \leq \tilde c r \, \Psi_{\tx{m}-1}^{-1} \left( \mint_{\mathbb{S}_r^{\tx{m}-1}(\tilde{x})} \Psi(|\nabla_\mathbb{S} \psi|) \, \d\mathscr{H}^{\tx{m}-1} \right).
$$
Since $\Psi(t)\leq \vp_\mm^-(t)\leq \vp(x,t)$ for every $(x,t)\in \mm\times [0,\infty)$ by \eqref{def:inf} and \eqref{equiv}, and since $\Psi_{\tx{m}-1}^{-1}$ is increasing, we can estimate the right hand side of the inequality above as
\begin{align*}
    \Psi_{\tx{m}-1}^{-1} \left( \mint_{\mathbb{S}_r^{\tx{m}-1}(\tilde{x})} \Psi(|\nabla_\mathbb{S} \psi|) \, \d\mathscr{H}^{\tx{m}-1} \right) & \leq  \Psi_{\tx{m}-1}^{-1} \left( \mint_{\mathbb{S}_r^{\tx{m}-1}(\tilde{x})} \varphi(y,|\nabla_\mathbb{S} \psi|) \, \d\mathscr{H}^{\tx{m}-1} \right)
\end{align*}
and the proof is complete.
\end{proof}
\noindent
We now present the counterpart of \cite[Lemma 5.1]{CC} in the framework of Musielak-Orlicz spaces.
\begin{lemma} \label{l0}
Let $\mm$ be a Riemannian manifold as in \eqref{M1}, with $\tx{m} \geq 2$ and $\partial \mm=\emptyset$. Let $\vp(x,\cdot)$ be a Young function satisfying the $\Delta_2$-condition near infinity for every $x\in\mm$. Assume that either \eqref{1a} or \eqref{1b} hold. Given any $\tilde{x} \in \mm$, $R \in (0, R_{\mm})$, any measurable function $w : \mathbb{B}_R^{\tx{m}}(\tilde{x}) \to \mathbb{R}^N$ such that
\[
\int_{\mathbb{B}_R^\tx{m}(\tilde{x})} \vp(y,|w|) \, \d\mathscr{H}^{\tx{m}} < \infty,
\]
and any $\varepsilon > 0$, the set
\[
\left\{ r \in (0, R) : r \Psi_{\tx{m}-1}^{-1} \left( \mint_{\mathbb{S}^{\tx{m}-1}_r(\tilde{x})} \vp(y,|w|) \, \d\mathscr{H}^{\tx{m}-1} \right) \leq \varepsilon \right\}
\]
has positive one-dimensional Lebesgue measure.
\end{lemma}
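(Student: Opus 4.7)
The plan is to argue by contradiction along the lines of \cite[Lemma~5.1]{CC}, the only novelty being that the spherical averages in the statement involve the Musielak--Orlicz integrand $\vp(y,|w|)$ instead of a pure Orlicz integrand. Suppose toward a contradiction that the set
$$E_\varepsilon:=\left\{r\in(0,R):\, r\,\Psi_{\tx{m}-1}^{-1}\!\left(\mint_{\mathbb{S}_r^{\tx{m}-1}(\tilde x)}\vp(y,|w|)\,\d\mathscr{H}^{\tx{m}-1}\right)\le\varepsilon\right\}$$
has one-dimensional Lebesgue measure zero. Since $\Psi_{\tx{m}-1}$ is a Young function (hence its inverse is increasing), for a.e. $r\in(0,R)$ one then has
$$\mint_{\mathbb{S}_r^{\tx{m}-1}(\tilde x)}\vp(y,|w|)\,\d\mathscr{H}^{\tx{m}-1}\,>\,\Psi_{\tx{m}-1}\!\left(\frac{\varepsilon}{r}\right).$$

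Next, I would multiply by $\mathscr{H}^{\tx{m}-1}\!\left(\mathbb{S}_r^{\tx{m}-1}(\tilde x)\right)\gtrsim_{\mm} r^{\tx{m}-1}$, the lower bound being uniform in $r\in(0,R)$ since $R<R_{\mm}$ ensures we are working within a normal coordinate chart centered at $\tilde x$, and integrate in $r$ using the coarea formula on $\mm$:
$$\int_{\mathbb{B}_R^{\tx{m}}(\tilde x)}\vp(y,|w|)\,\d\mathscr{H}^{\tx{m}}=\int_0^R\int_{\mathbb{S}_r^{\tx{m}-1}(\tilde x)}\vp(y,|w|)\,\d\mathscr{H}^{\tx{m}-1}\,\d r\,\gtrsim_{\mm}\int_0^R r^{\tx{m}-1}\,\Psi_{\tx{m}-1}\!\left(\frac{\varepsilon}{r}\right)\d r.$$
The leftmost quantity is finite by hypothesis on $w$, so reaching a contradiction reduces to showing that the rightmost integral is divergent. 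Through the change of variables $t=\varepsilon/r$, this amounts to proving
$$\int^{\infty}\frac{\Psi_{\tx{m}-1}(t)}{t^{\tx{m}+1}}\,\d t=\infty.$$

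For $\tx{m}=2$, \eqref{def:psim} gives $\Psi_1=\Psi$, so the required divergence is precisely \eqref{1aa}, equivalent to \eqref{1a} via \eqref{equiv}. For $\tx{m}\ge 3$, I would expand the definition \eqref{def:psim} and, using the identity $(F^*)^*=F$ for convex Young functions together with Fubini and an integration by parts, recast the divergence of $\int^{\infty}\Psi_{\tx{m}-1}(t)/t^{\tx{m}+1}\,\d t$ into the divergence of the integral appearing in \eqref{1bb}, which is equivalent to \eqref{1b} by \eqref{equiv} and is exactly the standing hypothesis. This purely Orlicz manipulation is the one carried out in \cite[Section~5]{CC}, and applies verbatim to the Young function $\Psi=(\vp_\mm^-)^{**}$ since it depends only on the analytic structure of $\Psi$ and its conjugate.

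The main technical obstacle is precisely this last reduction, i.e.\ the equivalence between $\int^{\infty}\Psi_{\tx{m}-1}(t)/t^{\tx{m}+1}\,\d t=\infty$ and condition \eqref{1b} when $\tx{m}\ge 3$, because it is the only step that requires genuine manipulation of Young conjugates. By contrast, the remaining ingredients (the coarea formula on $\mm$, the monotonicity of $\Psi_{\tx{m}-1}^{-1}$, and the change of variables $t=\varepsilon/r$) are routine. Once the Orlicz identity is imported from \cite{CC}, the contradiction is immediate and the lemma follows.
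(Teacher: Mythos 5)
Your argument is correct and follows essentially the same route as the paper: contradiction, integration in $r$, the substitution $t=\varepsilon/r$ reducing matters to the divergence of $\int^{\infty}\Psi_{\tx{m}-1}(t)/t^{\tx{m}+1}\,\d t$, and then the Orlicz equivalences (which the paper, too, imports from \cite[Lemma 2.3]{Cib} and \cite[Lemma 3.3(i)]{CC} rather than reproving). The only cosmetic difference is that you use the coarea formula on $\mm$ together with the bound $\mathscr{H}^{\tx{m}-1}(\mathbb{S}^{\tx{m}-1}_r)\gtrsim_\mm r^{\tx{m}-1}$, whereas the paper passes to geodesic spherical coordinates to identify the balls and spheres with their Euclidean counterparts; both are valid for $R<R_\mm$.
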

\begin{proof}
We start by observing that, since $R \in (0,R_\mm)$, using geodesic spherical coordinates centered at $\tilde{x}$, we can assume that $\mathbb{B}^{\tx{m}}_R(\tilde{x})$ agrees with $\tx{B}^{\tx{m}}_R$ in $\mathbb{R}^{\tx{m}}$, and $\mathbb{S}^{\tx{m}-1}_r(\tilde{x})$ is equal to $\tx{S}^{\tx{m}-1}_r$. Arguing by contradiction, let us suppose that there exists some $\tilde{x} \in \mm$ such that \eqref{1a} and \eqref{1b} holds and such that
$$ r\,\Psi^{-1}_{\tx{m}-1} \left (\mint_{\tx{S}^{\tx{m}-1}_r} \vp(y,|w|) \d\mathscr{H}^{\tx{m}-1} \right )>\varepsilon,$$
for almost every $r \in (0,R)$. Therefore,
$$\omega_{\tx{m}-1} r^{\tx{m}-1}\Psi_{\tx{m}-1}\left ( \frac{\varepsilon}{r}\right ) < \int_{\tx{S}^{\tx{m}-1}_r} \varphi(y,|w|) \d\mathscr{H}^{\tx{m}-1},$$
where $\omega_{\tx{m}-1} = \mathscr{H}^{\tx{m}-1}(\tx{S}_1^{\tx{m}-1})$. Therefore, integrating in $r\in (0,R)$, we obtain
\eqn{lhs}
$$\omega_{\tx{m}-1}  \int_0^R r^{\tx{m}-1} \Psi_{\tx{m}-1}\left ( \frac{\varepsilon}{r}\right ) \d r \leq \int_0^R \int_{\tx{S}_r^{\tx{m}-1}} \vp(y,|w|) \d\mathscr{H}^{\tx{m}-1} \d r = \int_{\tx{B}^\tx{m}_R} \vp(y,|w|) \dy <  \infty.$$
Now, when $\tx{m}=2$ we have $\Psi_{\tx{m}-1}=\Psi$ by definition \eqref{def:psim}, and the integral on the leftmost side of \eqref{lhs} diverges by  \eqref{1aa}, that is a contradiction. When $\tx{m} \geq 3$, then
$$ \int_0^R r^{\tx{m}-1} \Psi_{\tx{m}-1}\left ( \frac{\varepsilon}{r} \right ) \d r = \varepsilon \int_{1/R}^{\infty} \frac{\Psi_{\tx{m}-1}(t)}{t^{1+\tx{m}}} \d t.$$
 Applying \cite[Lemma 2.3]{Cib} to $A(\cdot):=\Psi(\cdot)$ (see also \cite[Equations (5.3)-(5.4)]{CC}) and deduce that
$$ \int^{\infty}\frac{\Psi_{\tx{m}-1}(t)}{t^{1+\tx{m}}} \d t = \infty$$
if and only if
$$ \int^{\infty} \left ( \frac{t}{{\Psi}_{\tx{m}-1}^*(t)}\right )^{\tx{m}-1} \d t =\infty.$$
But, by the definition of $\Psi_{\tx{m}-1}$ in \eqref{def:psim}, the last condition above is equivalent to
$$ \int^\infty t^{-\frac{{\tx{m}-1}}{{\tx{m}-2}}} \left ( \int_t^\infty \frac{{\Psi}^*(r)}{r^{1+\frac{\tx{m}-1}{\tx{m}-2}}} \d r\right )^{1-\tx{m}} \d t = \infty;$$
then, by \cite[Lemma 3.3(i)]{CC}, applied to $A(\cdot)=\Psi(\cdot)$ and with the choice $p=\tx{m}-1$, we have that the above condition is equivalent to \eqref{1bb}, therefore we conclude that the left hand side in \eqref{lhs} diverges. This leads to a contradiction also for $\tx{m}\geq 3$, and ends the proof.
\end{proof}
\noindent
We now prove that, under assumptions \eqref{1a} and \eqref{1b}, a function $ w \in W^{1,\varphi}$ has vanishing web oscillation. This generalizes the case treated in \cite[Lemma 5.1]{CC}, where the Young function $A(\cdot)$ is autonomous, i.e., it depends only on the variable $t$. Before proceeding, we make the following observation regarding our assumptions.
\begin{remark}
\normalfont
In \cite[Lemma~5.1]{CC}, the function \( w \) is not assumed to belong to \( L^{\infty}(\mm, \mathbb{R}^N) \). In our setting, however, this assumption is required, since otherwise condition  \eqref{v4} should be stated with \( t  \in [0,\tx{b}\rr^{-\tx{m}/\beta}] \), which for double phase functionals \eqref{dph} corresponds to the bound $q\leq p+\alpha p/\tx{m} $. This does not represent a loss of generality, because we will apply the next lemma to functions \( w \in W^{1,\varphi}(\mm, \nn) \), which are bounded thanks to the compactness of \( \nn \).
\end{remark}

\begin{lemma} \label{l1}
    Let $\mm$ be a Riemannian manifold as in \eqref{M1}, with $\tx{m} \geq 2$ and $\partial \mm=\emptyset$. Let $\vp: \mm \times [0,\infty) \to [0,\infty)$ be a function satisfying \eqref{v1}-\eqref{v4}. Assume either \eqref{1a} or \eqref{1b} holds. Let $w \in (W^{1,\varphi} \cap L^{\infty})(\mm,\R^N)$. Then, for every $\varepsilon>0$, there exists a finite family of geodesic spheres $\mathbb{S}^{\tx{m}-1}_{r_i}(x_i)$, with radii $r_i < \varepsilon$, $i=1,\dots,i_\varepsilon$, and a continuous function $\eta:\mm \to \R^N$ such that
    \eqn{l11}
    $$ w-\eta \in (W^{1,\varphi}_0 \cap L^{\infty})(U_h,\R^N)$$
    and
    \eqn{l12}
    $$ \operatorname*{osc}_{\partial U_h} \eta < \varepsilon,$$
    for $h=1,\dots,h_\varepsilon$, where $\{U_h\}_{h=1,\dots,h_\varepsilon}$ denote the connected components of the web $ \displaystyle \mm \setminus \left ( \cup_{i=1}^{i_\varepsilon} \mathbb{S}^{\tx{m}-1}_{r_i}(x_i) \right )$.
\end{lemma}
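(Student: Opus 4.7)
The strategy mirrors the autonomous scheme of \cite[Lemma~5.1]{CC}, with Theorem~\ref{t1} providing the Musielak-Orlicz oscillation bound on spheres, and Lemma~\ref{l0} supplying enough freedom to select radii on which this bound is quantitatively small. The goal is thus twofold: first, to produce the web $F$ as a finite union of geodesic spheres on each of which the trace of $w$ has oscillation less than $\varepsilon$; second, to promote these trace-controlled pieces to a globally continuous $\eta$ whose boundary data match $w$ in trace on each cell.

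First, by compactness of $\mm$, I fix $\rho<\min\{\varepsilon/2,R_\mm/2,1/2\}$ and an $\rho/2$-dense finite set of centers $x_1,\dots,x_K$, so that $\mm=\bigcup_k \mathbb{B}_{\rho/2}(x_k)$. For each $k$, I would apply Lemma~\ref{l0} to the function $|\nabla w|$ --- whose integrability against $\vp(y,\cdot)$ is guaranteed by $w\in W^{1,\vp}(\mm,\R^N)$ --- to find a positive-measure set of radii $r\in(\rho/2,\rho)$ for which
\[
r\,\Psi_{\tx{m}-1}^{-1}\!\left(\dashint_{\mathbb{S}^{\tx{m}-1}_r(x_k)}\vp(y,|\nabla w|)\,\d\mathscr{H}^{\tx{m}-1}\right)\leq \varepsilon/\tilde c,
\]
with $\tilde c$ the constant in \eqref{4.3}. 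Picking any such $r_k$, using $|\nabla_{\mathbb{S}}w|\leq|\nabla w|$ together with the monotonicity of $\vp(y,\cdot)$ and of $\Psi_{\tx{m}-1}^{-1}$, and invoking Theorem~\ref{t1} on the trace of $w$ yields $\operatorname{osc}_{\mathbb{S}^{\tx{m}-1}_{r_k}(x_k)} w<\varepsilon$. I then set $F:=\bigcup_k \mathbb{S}^{\tx{m}-1}_{r_k}(x_k)$. By construction each radius satisfies $r_k<\varepsilon$, and the $\rho/2$-density combined with $r_k\in(\rho/2,\rho)$ forces the cells $U_h$ of $\mm\setminus F$ to have diameter $<3\rho<\varepsilon$, since any two points in the same cell lie on the same side of every sphere, hence are both enclosed in some common $\mathbb{B}_{r_k}(x_k)\subset \mathbb{B}_\rho(x_k)$.

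To build $\eta$, I note that on each selected sphere $\mathbb{S}^{\tx{m}-1}_{r_k}(x_k)$ the trace of $w$ admits a continuous representative of oscillation $<\varepsilon$, again by Theorem~\ref{t1}. I would define $\eta$ on $F$ to coincide with these continuous representatives and, in each cell $U_h$, extend $w|_{\partial U_h}$ continuously into $U_h$ via a Whitney-Lipschitz extension. This produces $\eta\in C^0(\mm,\R^N)$ satisfying $\operatorname{osc}_{\partial U_h}\eta=\operatorname{osc}_{\partial U_h}w<\varepsilon$, which is \eqref{l12}. Since the trace of $w-\eta$ on $\partial U_h$ vanishes by construction, the characterisation of zero-trace Sobolev maps yields $w-\eta\in W^{1,\vp}_0(U_h,\R^N)$; the $L^\infty$-membership follows from $w\in L^\infty(\mm)$ and the boundedness of the continuous function $\eta$ on the compact manifold $\mm$, giving \eqref{l11}.

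The main technical obstacle is handling the Whitney-Lipschitz extension coherently across points where distinct spheres in $F$ intersect, ensuring that the continuous representatives on different spheres are mutually compatible and that the resulting global $\eta$ stays Sobolev-regular across the web. This is carried out verbatim as in \cite[Lemma~5.1]{CC}, the sole novelty being that the autonomous Young function $A$ used there is replaced by $\vp(x,\cdot)$, and estimates pass to the convex minorant $\Psi=(\vp_\mm^-)^{**}$ through \eqref{def:inf}, \eqref{equiv} and the local comparability assumption \eqref{v4}, which keeps $\vp(x_1,\cdot)$ and $\vp(x_2,\cdot)$ equivalent on the relevant range of values of $|Dw|$ across each geodesic ball $\mathbb{B}_\rho(x_k)$.
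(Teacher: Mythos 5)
Your high-level skeleton (cover $\mm$ by geodesic balls, use Lemma~\ref{l0} to choose good radii, apply Theorem~\ref{t1} to control oscillation on the resulting spheres, build $\eta$, conclude) is the right one, but the heart of the proof --- how you actually produce the \emph{continuous} function $\eta$ that is \emph{simultaneously} in $W^{1,\vp}(\mm,\R^N)$ and agrees with $w$ on the web in the $W^{1,\vp}_0$ sense --- is where you have replaced the actual argument by a sketch that does not work as stated.

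First, the construction you propose (take continuous representatives of the trace of $w$ on each sphere, then Whitney-Lipschitz extend into each cell $U_h$) is \emph{not} what \cite[Lemma~5.1]{CC} does, so the claim that the compatibility at sphere intersections is ``carried out verbatim'' is a gap, not a citation. Both \cite{CC} and the paper mollify $w$ to obtain a smooth sequence $\{w_\ell\}$ converging in $W^{1,\vp}$, choose spheres on which $w_\ell\to w$ also converges on the $(\tx{m}-1)$-dimensional level (via Fubini and Lemma~\ref{l0}), and then define $\eta$ as the limit of an \emph{iterated truncation} sequence $\eta_1=w_1$, $\eta_\ell-\eta_{\ell-1}=T_{2^{-\ell}\varepsilon}(w_\ell-w_{\ell-1})$. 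The geometric sum $\sum 2^{-\ell}\varepsilon$ gives uniform convergence hence continuity, the gradient contraction $|DT_\delta v|\le|Dv|$ gives $W^{1,\vp}$-convergence, and the oscillation control on spheres (uniform in $\ell$) forces $T_\delta(w_\ell-w_{\ell-1})=w_\ell-w_{\ell-1}$ there, so $\eta_\ell=w_\ell$ on each sphere, which is exactly what guarantees $w-\eta\in W^{1,\vp}_0(U_h)$. Your Whitney extension neither obviously lands in $W^{1,\vp}$ (the extension operator is built for Lipschitz/Hölder data, not Musielak-Orlicz control of $D\eta$), nor gives the zero-trace information in the $W^{1,\vp}_0$ sense used in the paper, which is \emph{extension by zero belongs to $W^{1,\vp}(\mm)$}, not a trace characterisation; in the non-reflexive and non-autonomous setting this identification cannot be taken for granted.

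Second, even the oscillation estimate \eqref{l12} is obtained too quickly. You select radii so that $\operatorname{osc}_{\mathbb{S}_{r_k}}w<\varepsilon$ and then assert $\operatorname{osc}_{\partial U_h}\eta<\varepsilon$, but $\partial U_h$ is a union of \emph{arcs} of several distinct spheres; oscillation bounded by $\varepsilon$ on each of them does not control oscillation on the union. The paper deliberately chooses the threshold to be $\varepsilon/4$ on each sphere, uses the non-nestedness of the balls to ensure every sphere meeting a cell also meets the ``reference'' sphere of that cell, and then applies a three-term triangle inequality through that reference sphere to conclude $\operatorname{osc}_{\partial U_h}\eta\le 3\cdot\varepsilon/4<\varepsilon$. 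Without this bookkeeping your constant is off by a factor, and more importantly the reasoning for why two arcs on different spheres have comparable $\eta$-values is missing. In short: the cover-and-choose-radii step and the invocation of Theorem~\ref{t1} are right, but you still need the mollification-plus-iterated-truncation machinery to actually manufacture $\eta$, and the cell-boundary oscillation requires the paper's three-sphere chaining argument.
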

\begin{proof}
Let $w \in (W^{1,\varphi} \cap L^{\infty})(\mm, \mathbb{R}^N)$. Let us take an atlas $\{W_j, \phi_j\}_{j=1}^{ \bar{\tx{m}}}$ of $\mm$ and  subsets $V_j \Subset W_j$ such that the family $\{V_j\}_{j=1}^{ \bar{\tx{m}}}$  covers $\mm$, and $\phi_j(V_j) = \tx{B}_1$, $\phi_j(W_j) = \tx{B}_2$, for all $j = 1, \dots,  \bar{\tx{m}}$, with $\tx{B}_1, \tx{B}_2 \subset \mathbb{R}^{\tx{m}}$. Let $\{\chi_j\}_{j=1}^{ \bar{\tx{m}}}$ be a partition of unity subordinate to  $\{V_j\}_{j=1}^{ \bar{\tx{m}}}$. Thanks to Remark \ref{rem1}, we can apply the convolution argument of \cite[Theorem 2.3, Lemma 7.1]{BGS} (adapted to the coordinate domain $V_j$ which we identify with $\tx{B}_1$), and obtain sequences $\{w^j_\ell\}_{\ell \in \mathbb{N}}$ for all $j=1,\dots, \bar{\tx{m}}$, such that
    \eqn{sl}
   $$ \{w^j_\ell\}_\ell \subset C^{\infty}(V_j, \R^N), \quad 
 w^j_{\ell} \xrightarrow{\ell\to\infty} w \quad \text{strongly in } W^{1,\vp}(V_j,\R^N).$$ 
Moreover, since $w\in L^\infty(\mm,\R^N)$ and $w^j_\ell$ are constructed via convolution, we have that
\begin{equation}\label{alm:inf}
    w^j_\ell \xrightarrow{\ell\to\infty} w\quad\text{in $L^q(V_j,\,\mathbb R^N)$}
\end{equation}
 for all $q\in [1,\infty)$ and $j=1,\dots,\bar{\tx{m}}$. Now, we define the sequence
\eqn{successione}
$$w_\ell(x) := \sum_{j=1}^{ \bar{\tx{m}}} \chi_j \cdot w^j_\ell(x)$$
Then, by \eqref{sl}, \eqref{alm:inf}, \eqref{du1}-\eqref{control:vp}, it is immediate to verify that
\eqn{sl1}
$$\{w_{\ell}\}_{\ell} \subset C^\infty(\mm, \mathbb{R}^N), \quad w_\ell \xrightarrow{\ell\to\infty} w\quad \text{strongly in } W^{1,\varphi}(\mm, \mathbb{R}^N)\cap L^q(\mm,\mathbb{R}^N),$$
for every $q\in [1,\infty)$.\\

\noindent Now, by \eqref{d2} and Lemma \ref{l0}, for every $\varepsilon >0$,  it holds
\eqn{fl0}
$$ \tilde{c}\, r \Psi_{\tx{m}-1}^{-1} \left( \mint_{\mathbb{S}^{\tx{m}-1}_r(\tilde{x})} \vp(y,|Dw|) \, \d\mathscr{H}^{\tx{m}-1} \right)  \leq \frac{\varepsilon}{4},$$
for every $\tilde{x} \in \mm$, $\mathbb{S}^{\tx{m}-1}_r(\tilde{x}) \subseteq \mm$ and $r$ in a subset of $(0,R_{\mm})$ with positive measure, where $\tilde{c}$ is the constant appearing in \eqref{4.3}. Now, fix any $R \in (0,\min\{\varepsilon,R_\mm\})$, so that by \eqref{sl1}, for every $\tilde{x} \in \mm$ and every $k>0$, by \eqref{conv2}, we have
$$ \lim_{\ell\to \infty} \int_{\mathbb{B}^{\tx{m}}_R(\tilde{x})}|w_{\ell} - w|\d \mathscr{H}^{\tx{m}}+\int_{\mathbb{B}^{\tx{m}}_R(\tilde{x})}\vp(y,k|Dw_{\ell}-Dw|)\d \mathscr{H}^{\tx{m}} =0.$$
Applying Fubini's theorem, for every $\tilde{x} \in \mm$ and almost every $r \in (0,R)$, we get
\eqn{con1}
$$\lim_{\ell\to \infty} \int_{\mathbb{S}^{\tx{m}-1}_r(\tilde{x})}|w_{\ell} - w|\d \mathscr{H}^{\tx{m}-1}+\int_{\mathbb{S}^{\tx{m}-1}_r(\tilde{x})}\vp(y,2|Dw_{\ell}-Dw|)\d \mathscr{H}^{\tx{m}-1} =0. $$
In particular, for every $\tilde{x} \in \mm$, there exists $r_{\tilde{x}} \in (0,R)$ such that \eqref{fl0} and \eqref{con1} hold with $r=r_{\tilde{x}}$. Therefore, by \eqref{con1}, we have
\eqn{5.12}
$$ \lim_{\ell\to \infty} \int_{\mathbb{S}^{\tx{m}-1}_{r_{\tilde{x}}}(\tilde{x})}\vp(y,|Dw_{\ell}|)\d \mathscr{H}^{\tx{m}-1}=\int_{\mathbb{S}^{\tx{m}-1}_{r_{\tilde{x}}}(\tilde{x})}\vp(y,|Dw|)\d \mathscr{H}^{\tx{m}-1}.$$
Moreover, thanks to \eqref{sl1}, we can extract a subsequence, still labeled as $\{w_\ell\}_\ell$, such that
\eqn{us}
$$ \nr{w_{\ell} -w_{{\ell}-1}}_{W^{1,\vp}(\mm,\R^N)}\leq 2^{-\ell},\quad \text{for every $\ell$.}$$
The family $\{ {\mathbb{B}}^{\tx{m}}_{r_{\tilde{x}}}(\tilde{x})\}_{\tilde{x} \in \mm}$ is an open covering of $\mm$. Hence, by the compactness of $\mm$, for every $\varepsilon>0$, there exists a finite family $\{{\tilde{x}}_i \}_i$, $i=1,\dots,i_\varepsilon$, such that $\{{\mathbb{B}}^{\tx{m}}_i \}_i$, $i=1,\dots, i_\varepsilon$ is a finite covering of $\mm$, where ${\mathbb{B}}^{\tx{m}}_i={\mathbb{B}}^\tx{m}_{r_i}({\tilde{x}}_i)$ and $r_i = r_{{\tilde{x}}_i} < \varepsilon$. 
Clearly, we can suppose that ${\mathbb{B}}^\tx{m}_i \nsubseteq {\mathbb{B}}^{\tx{m}}_j$ if $i\neq j$ and we set $\mathbb{S}^{\tx{m}-1}_i= \partial {\mathbb{B}}^\tx{m}_i$. Hence, by \eqref{con1} applied with $\tilde{x} = \tilde{x}_i$, $i = 1, \dots, i_\varepsilon$, up to extracting a further subsequence, still labeled as $\{w_\ell\}_\ell$, we have
\begin{align} \label{wh}
  \mint_{\mathbb{S}^{\tx{m}-1}_i}|w_\ell - w|\d \mathscr{H}^{\tx{m}-1}  +   \tilde{c}r_i\Psi^{-1}_{\tx{m}-1}\left (\mint_{\mathbb{S}^{\tx{m}-1}_i}\vp(y,2|Dw_{\ell}-Dw|)\d \mathscr{H}^{\tx{m}-1}  \right ) \leq 2^{-\ell-3} \varepsilon.
\end{align}
By the monotonicity of $\vp$
\eqn{fact}
$$\vp(x,s+t)\leq\vp(x,2s)+\vp(x,2t), \quad \text{for every } x \in \mm \text{ and all } s,t \geq 0,$$
 \eqref{wh}, the fact that $t \mapsto \Psi^{-1}_{\tx{m}-1}(x,t)$ is increasing and that $\Psi^{-1}_{\tx{m}-1}$ is a concave function vanishing at $0$, we get 
\begin{align} \label{qe1}
& \tilde{c}  r_i \Psi^{-1}_{\tx{m}-1} \left ( \mint_{\mathbb{S}^{\tx{m}-1}_i} \vp(y,|Dw_{\ell} -Dw_{{\ell}-1}|) \d \mathscr{H}^{\tx{m}-1}\right ) + \mint_{\mathbb{S}^{\tx{m}-1}_i}|w_\ell - w_{\ell -1 }|\d \mathscr{H}^{\tx{m}-1}\notag  \\ & \qquad \leq \tilde{c} r_i \Psi^{-1}_{\tx{m}-1} \left ( \mint_{\mathbb{S}^{\tx{m}-1}_i} \vp(y,2|Dw_{\ell} -Dw|) \d \mathscr{H}^{\tx{m}-1} + \mint_{\mathbb{S}^{\tx{m}-1}_i} \vp(y,2|Dw -Dw_{{\ell}-1}|) \d \mathscr{H}^{\tx{m}-1}\right ) \notag \\ & \qquad \quad +  \mint_{\mathbb{S}^{\tx{m}-1}_i} |w_\ell - w| \d \mathscr{H}^{\tx{m}-1} +   \mint_{\mathbb{S}^{\tx{m}-1}_i} |w - w_{\ell -1}| \d \mathscr{H}^{\tx{m}-1} \notag \\ & \qquad \leq \tilde{c} r_i \Psi^{-1}_{\tx{m}-1} \left ( \mint_{\mathbb{S}^{\tx{m}-1}_i} \vp(y,2|Dw_{\ell} -Dw|) \d \mathscr{H}^{\tx{m}-1} \right )+ \tilde{c} r_i \Psi^{-1}_{\tx{m}-1}\left (\mint_{\mathbb{S}^{\tx{m}-1}_i} \vp(y,2|Dw -Dw_{{\ell}-1}|) \d \mathscr{H}^{\tx{m}-1}\right ) \notag \\ & \qquad \quad + \mint_{\mathbb{S}^{\tx{m}-1}_i} |w_\ell - w| \d \mathscr{H}^{\tx{m}-1} +   \mint_{\mathbb{S}^{\tx{m}-1}_i} |w - w_{\ell -1}| \d \mathscr{H}^{\tx{m}-1} \leq  2^{-\ell-2}\varepsilon.
\end{align}
Thanks to \eqref{1a}-\eqref{1b}, \cite[Lemma 2.3]{Cib} and \cite[Lemma 3.3(i)]{CC}, assumption \eqref{ip} is fulfilled, so applying \eqref{4.3} to $\psi=w_{\ell}-w_{{\ell}-1}$ on $\mathbb{S}^{\tx{m}-1}_i$, we get
\eqn{b3}
$$\text{osc}_{\mathbb{S}^{\tx{m}-1}_i} (w_{\ell}-w_{{\ell}-1}) \leq \tilde{c} r_i \Psi_{\tx{m}-1}^{-1} \left(\mint_{\mathbb{S}^{\tx{m}-1}_i} \vp(y,|\nabla_\mathbb{S} (w_{\ell}-w_{{\ell}-1})|) \, \d\mathscr{H}^{\tx{m}-1} \right).$$
 Moreover, 
\begin{align} \label{forinf}
    \inf_{\mathbb{S}_i^{\tx{m}-1}}|w_{\ell}-w_{{\ell}-1}| & \leq \mint_{\mathbb{S}_i^{\tx{m}-1}}|w_{\ell}-w_{{\ell}-1}| \d \mathscr{H}^{\tx{m}-1}.
\end{align} Therefore, from \eqref{qe1}-\eqref{forinf} we obtain
\begin{align} \label{ai}
    \sup_{\mathbb{S}_i^{\tx{m}-1}}|w_{\ell}-w_{{\ell}-1}| \leq \inf_{\mathbb{S}_i^{\tx{m}-1}}|w_{\ell}-w_{{\ell}-1}| + \text{osc}_{\mathbb{S}_i^{\tx{m}-1}}(w_{\ell}-w_{{\ell}-1}) \leq  
    2^{{-\ell -2}}\varepsilon.
\end{align}
Now, for $\delta >0$, let $T_\delta: \R^N \to \R^N$ be the smooth truncation operator at the level set $\delta$, i.e.,
\eqn{tr1}
$$ T_\delta(\xi)=\xi\, \beta_\delta(|\xi|), \quad \text{for } \xi \in \R^\tx{m},$$
where  $\beta_\delta:[0,\infty)\to [0,\infty)$ is defined by
\begin{equation}\label{def:beta}
    \beta_\delta(s)=\begin{cases}
        1\quad &\text{if $s\in [0,2\delta]$}
        \\
        \frac{4(s-\delta)\delta}{s^2} \quad &\text{if $s\geq 2\delta $}.
    \end{cases}
\end{equation}
We inductively define the sequence $\{\eta_{\ell} \}_{\ell}$ by 
\begin{equation}\label{eta:ell}
    \eta_1 := w_1,\quad \eta_{\ell} - \eta_{{\ell}-1}:= T_{  2^{-\ell}\varepsilon}(w_{\ell}-w_{{\ell}-1}), \quad \text{for } \ell \geq 2.
\end{equation}
Then $\eta_{\ell} \in C^{1}(\mm,\R^N).$ Moreover, for every $\ell\geq 2$, by definition of $\beta(\cdot)$ and $\eta_\ell$, we have
\eqn{cc}
$$ \sup_{\mm}|\eta_{\ell} - \eta_{{\ell}-1}|\leq   
2^{2-\ell}\varepsilon.$$
Again, by definition of $T_\delta$ and \eqref{ai}, we have $T_{  2^{-\ell}\varepsilon}(w_{\ell}-w_{{\ell}-1})=(w_{\ell} -w_{{\ell}-1})$ on each $\mathbb{S}^{\tx{m}-1}_i$, $i=1,\dots,i_\varepsilon$, and for every $\ell \in \mathbb{N}$ 
\eqn{5.22}
$$ \eta_{\ell}=w_{\ell} \quad \text{on} \quad \cup_{i=1}^{i_\varepsilon}\mathbb{S}^{\tx{m}-1}_i.$$
Then, for any weakly differentiable function $v$,  standard properties of the truncation operator \cite[Eq. 5.17]{HIMO} entail
\begin{equation}\label{Dtrunc}
    |DT_\delta (v)|\leq |Dv|\quad\text{almost everywhere.}
\end{equation}
This information together with \eqref{eta:ell} and \eqref{us} yield
\eqn{ab}
$$ \nr{D(\eta_{\ell}-\eta_{{\ell}-1})}_{L^{\vp}(\mm,\R^N)} \leq  \nr{D(w_{\ell}-w_{{\ell}-1})}_{L^{\vp}(\mm,\R^N)} \leq 2^{-\ell}.$$
Then, by \eqref{cc} and \eqref{ab}, $\{ \eta_{\ell} \}_{\ell}$ is a Cauchy sequence in $C^0(\mm,\R^N)$ and $W^{1,\vp}(\mm,\R^N)$, so we can set $$\eta  :=\lim_{\ell\to\infty} \eta_{\ell};$$ 
then, since by \eqref{5.22} it holds $w_{\ell} - \eta_{\ell} \in W^{1,\vp}_0(U_h,\R^N)$, we have that $w-\eta \in W^{1,\vp}_0(U_h,\R^N)$, for $h=1,\dots,h_\varepsilon$. Moreover, since $\eta_{\ell} \to \eta$ uniformly, $\eta_\ell \equiv w_\ell$ on $\mathbb{S}^{\tx{m}-1}_i$ due to \eqref{5.22}, and by \eqref{b3} with $\eta_{\ell}$ in place of $w_{\ell} - w_{{\ell}-1}$, 
\eqref{5.12} and \eqref{fl0} with $\tilde{x}=x_i$, we get 
\begin{align} \label{5.24}
    \text{osc}_{\mathbb{S}^{\tx{m}-1}_i} \eta & = \liminf_{{\ell} \to \infty} \text{osc}_{\mathbb{S}^{\tx{m}-1}_i}\eta_{\ell} \notag \\ & \leq \tilde{c} r_i \liminf_{\ell\to \infty}  \Psi^{-1}_{\tx{m}-1}\left (\mint_{\mathbb{S}^{\tx{m}-1}_i}\vp(y,|\nabla_{\mathbb{S}}\eta_{\ell}  |) \d \mathscr{H}^{\tx{m}-1} \right ) \notag \\ & = \tilde{c} r_i \liminf_{\ell\to \infty}  \Psi^{-1}_{\tx{m}-1}\left (\mint_{\mathbb{S}^{\tx{m}-1}_i}\vp(y,|\nabla_{\mathbb{S}}w_{\ell}|) \d \mathscr{H}^{\tx{m}-1} \right )  \notag \\ &  \leq  \tilde{c}r_i \liminf_{\ell\to \infty}  \Psi^{-1}_{\tx{m}-1}\left (\mint_{\mathbb{S}^{\tx{m}-1}_i}\vp(y,|Dw_{\ell}  |) \d \mathscr{H}^{\tx{m}-1} \right )  \notag  \\ & = \tilde{c}r_i  \Psi^{-1}_{\tx{m}-1}\left (\mint_{\mathbb{S}^{\tx{m}-1}_i}\vp(y,|Dw  |) \d \mathscr{H}^{\tx{m}-1} \right ) \leq  \frac{\varepsilon}{4},
\end{align}
for $i=1,\dots,i_\varepsilon$. The rest of the proof now follows \cite[pp. 576]{CC}: we observe that each connected component $U_h$ of $\mm \setminus \cup_{i=1}^{i_\varepsilon} {\mathbb{S}}_i^{\tx{m}-1}$ lies within some ball ${\mathbb{B}}^{\tx{m}}$ from the finite covering of $\mm$, and its boundary $\partial U_h$ consists of portions of spheres ${\mathbb{S}}_i^{\tx{m}-1}$ intersecting ${\mathbb{B}}^\tx{m}$. Setting ${\mathbb{S}}^{\tx{m}-1} =  \partial {\mathbb{B}}^\tx{m}$, and noting that ${\mathbb{B}}_i^{\tx{m}} \not\subseteq {\mathbb{B}}_j^{\tx{m}}$ when $i \neq j$, each sphere ${\mathbb{S}}_i^{\tx{m}-1}$ intersecting ${\mathbb{B}}^\tx{m}$ must necessarily intersect $\partial {\mathbb{B}}^\tx{m}$ as well. Consequently, for any $z_1, z_2 \in \partial U_h$, we have $z_1 \in {\mathbb{S}}_{i_1}^{\tx{m}-1}$ and $z_2 \in {\mathbb{S}}_{i_2}^{\tx{m}-1}$ for appropriate indices, with both ${\mathbb{S}}_{i_1}^{\tx{m}-1}$ and ${\mathbb{S}}_{i_2}^{\tx{m}-1}$ that have not empty intersection with ${\mathbb{S}}^{\tx{m}-1}$. Selecting points $y_1 \in {\mathbb{S}}_{i_1}^{\tx{m}-1} \cap {\mathbb{S}}^{\tx{m}-1}$ and $y_2 \in {\mathbb{S}}_{i_2}^{\tx{m}-1} \cap {\mathbb{S}}^{\tx{m}-1}$, we deduce from \eqref{5.24} that:
\[
|\eta(z_1) - \eta(z_2)| \leq |\eta(z_1) - \eta(y_1)| + |\eta(y_1) - \eta(y_2)| + |\eta(y_2) - \eta(z_2)| 
\]
\[
\leq \text{osc}_{{\mathbb{S}}_{i_1}^{\tx{m}-1}} \eta + \text{osc}_{{\mathbb{S}}^{\tx{m}-1}} \eta + \text{osc}_{{\mathbb{S}}_{i_2}^{\tx{m}-1}} \eta < \varepsilon,
\]
that is \eqref{l12}, thus completing the proof.
\end{proof}

\section{Density results and absence of Lavrentiev phenomenon} \label{sec5}
\noindent
In this section, we establish the density of smooth maps between manifolds in $W^{1,\vp}$, with consequently absence of Lavrentiev phenomenon. 

\noindent Theorem~\ref{theorem1} relies strongly on Lemma~\ref{l1} and follows the approach of \cite[Theorem 5.1]{HIMO}, adapted to our Musielak-Orlicz setting. Theorem~\ref{theorem2} shows that, in the absence of assumptions \eqref{1a} and \eqref{1b}, the topological condition of $\tx{k}$-connectedness of the target manifold \( \nn \) allows us to recover the density of smooth maps in \( W^{1,\varphi} \).

\begin{proof}[Proof of Theorem \ref{theorem1}]
Owing to Remark \ref{remark:bdry}, it suffices to prove the theorem only in the case $\partial \mm=\emptyset$.
    
    Let $w \in W^{1,\vp}(\mm,\nn)$. Let $\mathbb{S}^{\tx{m}-1}_{r_i}(x_i)$ be a family of geodesic spheres with $r_i < \varepsilon$, $i=1,\dots,i_\varepsilon$, and $U_h$, $h=1,\dots,h_\varepsilon$, the connected component of $ \displaystyle \mm \setminus \left ( \cup_{i=1}^{i_\varepsilon} \mathbb{S}^{\tx{m}-1}_{r_i}(x_i) \right )$ coming from Lemma \ref{l1}. For every $h=1,\dots,h_\varepsilon$, consider the map
    $$ T_\varepsilon \circ w: U_h \to \R^N,$$
    where $T_\varepsilon: \R^N \to \R^N$ is the truncation operator defined by
    $$ T_\varepsilon y = y_* +(y-y_*)\,\beta_\varepsilon(|y-y_*|),$$
    with fixed point $y_* \in w(\partial U_h) \subset \nn \subset \R^N$, and  
with $\beta_\varepsilon$ given by \eqref{def:beta}. We observe that $\beta(t) \in [0,1]$ and 
\eqn{eps}
    $$ |T_\varepsilon y-y_*|\leq 4\varepsilon \quad \text{for every } y \in \R^N.$$
Then $T_\varepsilon w \in W^{1,\vp}(\mm,\R^N)$, and, by \eqref{Dtrunc},  we have that
    \eqn{tr}
    $$ |D(T_\varepsilon w)| \leq |Dw| \quad \text{almost everywhere in } U_h.$$
    Moreover, by Lemma \ref{l1} there exists $u \in (W^{1,\varphi}_0 \cap L^{\infty})(U_h,\R^N)$ and a continuous function $\eta \in W^{1,\vp}(\mm,\R^N)$ such that
    \begin{equation}\label{somma}w=\eta + u\quad\text{ on $U_h$.}
    \end{equation}
  We extend $u\equiv 0$ on $\mm\setminus U_h$, and construct a sequence $\{u_j\}_{j} \subset C^{\infty}_{{c}}(U_h,\R^N)$, following the lines of $\{w_{\ell}\}_{\ell}$ in \eqref{successione}, which  approximates $u$ in $W^{1,\vp}(U_h,\mathbb{R}^N)$, i.e.
    \begin{equation}\label{new:conv}
        u_j\xrightarrow{j\to\infty} u\quad\text{in $W^{1,\varphi}_0(U_h,\R^N)\cap L^q(U_h,\mathbb{R}^N)$,   \quad \text{for } $q\geq 1$.}
    \end{equation}
    We also observe that $\eta - T_\varepsilon(\eta+u_j)$ vanishes in a neighborhood of $\partial U_h$ (possibly depending on $j$). Indeed, $\eta(\partial U_h) \subset \tx{B}_{\varepsilon}(y_*) \subset \R^N$ by \eqref{l11}-\eqref{l12}, and since $\eta$ is continuous, the image of a suitably small neighborhood of $\partial U_h$ lies in $\tx{B}_{3/2\varepsilon}(y_*)$. \\
    
   \noindent Also, since $u_j\in C^\infty_c(U_h,\mathbb{R}^N)$, then in a smaller neighborhood $\mathcal{U_{j,h}}$ of $\partial U_h$, we have that the image of $u_j$ lies in $\tx{B}_{1/2\varepsilon}$.
    Therefore, the image of $\mathcal{U_{j,h}}$ through $\eta+u_j$ lies in $\tx{B}_{2\varepsilon}(y_*)$, so
    it  only remains to notice that $T_\varepsilon y=y$ in $\tx{B}_{2\varepsilon}(y_*)$, whence $\eta-T_\varepsilon(\eta+u_j)$ vanishes in $\mathcal{U_{j,h}}$.   Using these facts, together with the continuity of truncation operator $T_\varepsilon: W^{1,\vp}(U_h,\R^N) \to W^{1,\vp}(U_h,\R^N)$ guaranteed by \eqref{tr},  and recalling \eqref{somma}, we conclude that
\begin{equation}\label{in}
    \begin{split}
         w - T_\varepsilon w &=w-T_\varepsilon(\lim_{j \to \infty} (\eta + u_j)) =\eta +u - \lim_{j \to \infty} T_\varepsilon(\eta + u_j) 
         \\
         &= u + \lim_{j \to \infty}(\eta - T_\varepsilon(\eta + u_j)) \in  W^{1,\vp}_0(U_h,\R^N).
    \end{split}
\end{equation}
We now apply the above truncation procedure to $w$ on each $U_h$ and we denote the resulting map by $\tilde{w}_\varepsilon: \mm \to \R^N$. It follows from   \eqref{tr} and \eqref{in}, that 
    $$\tilde{w}_\varepsilon \in (W^{1,\vp}\cap  L^{\infty})(\mm, \R^N),\quad \tilde{w}_\varepsilon=w\,\text{ on $\partial U_h$ for all $h=1,\dots,h_\varepsilon$},$$
    and
    \eqn{bd}
    $$|D\tilde{w}_\varepsilon(x)| \leq |Dw(x)|, \quad \text{for almost every } x \in \mm.$$
    We observe that the image of $\mm$ via $\tilde{w}_\varepsilon$ is no longer in $\nn$, but we have a control on its oscillation: indeed, thanks to \eqref{eps}, for all $x_1, x_2 \in U_h$, it holds
    \eqn{eps2}
    $$ |\tilde{w}_\varepsilon(x_1) - \tilde{w}_\varepsilon (x_2)| \leq 8 \varepsilon.$$
 Now, for every $U_h$, by Poincar\'e inequality and \eqref{bd}, it holds
    $$ \int_{U_h}\frac{|\tilde{w}_\varepsilon -w|}{\text{diam}U_h} \d\mathscr{H}^{\tx{m}} \leq c \int_{U_h}|D\tilde{w}_\varepsilon -Dw| \d\mathscr{H}^{\tx{m}} \leq c \int_{U_h}|Dw| \d\mathscr{H}^{\tx{m}},$$
   with $c\equiv c(\mm)$. Recalling that $r_i<\varepsilon$, for every $i=1,\dots,i_\varepsilon$, we have that $\text{diam}U_h<c\,\varepsilon$; so, summing up on $h=1,\dots,h_\varepsilon$ the previous estimate and  using that $U_h$ are disjoint, we conclude that
    \eqn{ul}
    $$ \int_{\mm} |\tilde{w}_\varepsilon-w|\d\mathscr{H}^{\tx{m}} \leq c\,\varepsilon\, \nr{Dw}_{L^1(\mm)},$$
    with $c$ as above. Now, \eqref{bd} implies that the sequence $\{D\tilde{w}_\varepsilon\}_{\varepsilon}$ is uniformly bounded in $L^\varphi$; by the reflexivity of such space (see for instance \cite[Theorem 3.6.6]{HHbook}),  for $\varepsilon \to 0$ we have that
    \eqn{lt00}
    $$ D\tilde{w}_\varepsilon \to Dw \quad  \text{weakly in }L^{\vp}(\mm,\R^{N \times \tx{m}}).$$
    Using again \eqref{bd} and  the lower semicontinuity of the $W^{1,\vp}$-norm, we get
    \begin{equation*}
        \|D\tilde{w}_\varepsilon\|_{L^\varphi(\mm)}\leq \|Dw\|_{L^\varphi(\mm)}\leq \liminf_{\varepsilon\to 0} \|D\tilde{w}_\varepsilon\|_{L^\varphi(\mm)},
    \end{equation*}
  whence $\lim_{\varepsilon\to 0} \|D\tilde{w}_\varepsilon\|_{L^\varphi(\mm)}=\|Dw\|_{L^\varphi(\mm)}$, which combined with \eqref{ul}, \eqref{lt00} and the uniform convexity of the space $L^\varphi$ (see for instance \cite[Theorem 3.6.6]{HHbook}) yields 
    \eqn{lt0}
    $$\tilde{w}_\varepsilon \xrightarrow{\varepsilon\to 0} w \quad  \text{strongly in }W^{1,\vp}(\mm,\R^{N}).$$
    Now, for every $\varepsilon>0$, we construct, via the same mollification procedure leading to \eqref{successione} (replacing $w$ with $\tilde{w}_\varepsilon$),  a sequence $\{\tilde{w}_\varepsilon^\ell \}_\ell\subset C^{\infty}(\mm,\R^N)$ such that
    \eqn{lt}
    $$ \tilde{w}_\varepsilon^\ell \xrightarrow{\ell\to \infty} \tilde{w}_\varepsilon \text{ strongly in } W^{1,\varphi}(\mm,\R^N).$$
    Then, by the properties of convolution (see \cite[Eq. (5.32)]{HIMO}) there exists $\ell_\varepsilon>0$ such that, for every $U_h$ and $\ell \in (0,\ell_\varepsilon]$, we have
    \eqn{5.32}
    $$ \text{osc}_{U_h}\tilde{w}_\varepsilon^\ell \lesssim_\mm \text{ess osc}_{U_{h}'} \tilde{w}_\varepsilon \leq 24 \varepsilon, $$
    where $U_{h}' :=\{x \in \mm : \dist(x,U_h) < \ell' \}$ and $\ell \lesssim_\mm \ell' \lesssim_\mm \ell$; note that the last inequality follows from \eqref{eps2} and the remarks after Eq. (5.32) in \cite{HIMO}.
    
    By \eqref{lt0} and \eqref{lt} we deduce that, as $\varepsilon \to 0$, it holds
    \eqn{lt1}
    $$ \tilde{w}_\varepsilon^{\ell_\varepsilon} \to w \text{ strongly in } W^{1,\varphi}(\mm,\R^N).$$
    Now, we project $ \tilde{w}_\varepsilon^{\ell_\varepsilon}$ smoothly onto $\nn$. To this end, we consider the closest point projection $\Pi: \nn_{\tilde{h}} \to \nn$ of a suitable tubular neighborhood $\nn_{\tilde{h}}$ of $\nn$ onto $\nn$, which is a $C^{\infty}$-smooth map. We define the approximating sequence as $$w_{k} := \Pi(\tilde{w}_{\varepsilon_{k}}^{\ell_{k}}),$$ where $\varepsilon_{k} \to 0$ and $\ell_{k}\to 0$ are chosen accordingly. \\

\noindent Let us show that the sequence $\{w_{k}\}_{k}$ is well defined. By \eqref{eps}, we have that $\tilde{w}_\varepsilon(U_h)$ is contained in a $4\varepsilon$-neighborhood of $\nn$, say $\nn_{\tilde{h}/4}$, for $\varepsilon$ small enough. Thanks to \eqref{lt}, we have that $\tilde{w}^\ell_\varepsilon\xrightarrow{\ell\to\infty}\tilde{w}_\varepsilon$ almost everywhere on $\mm$. Therefore, for every mesh $U_h$, we can choose a  point $x_0\in U_h$ and $k_0$ large enough such that $|w_{\varepsilon_k}^{\ell_k}(x_0)-\tilde{w}_{\varepsilon_k}(x_0)|<\tilde{h}/4$, for every $k>k_0$. This information together with \eqref{5.32} imply that $w^{\ell_k}_{\varepsilon_k}(U_h)$ lies in $\nn_{\tilde{h}}$ for $k>k_0$ large enough, and repeating this argument for every mesh $U_h$, $h=1,\dots,h_{\varepsilon_k}$, we deduce that $w^{\ell_k}_{\varepsilon_k}(\mm)\subset \nn_{\tilde{h}}$. Hence $w_k$ is well defined, and
it belongs to $C^{\infty}(\mm,\nn)$ thanks to the smoothness of $\Pi$ and $w^\ell_\varepsilon$.

Finally, by \eqref{lt1}, the chain rule, the  Lipschitz continuity of $\Pi$ and $(\ref{du})_2$, we obtain
    \begin{align*}
        \nr{w_{k} -w}_{W^{1,\vp}(\mm,\nn)} & = \nr{w_{k} -\Pi(w)}_{W^{1,\vp}(\mm,\nn)} \\ & = \nr{w_{k} -\Pi(w)}_{L^1(\mm,\nn)} + \nr{Dw_{k} -D\Pi(w)}_{L^{\vp}(\mm,\nn)} \\ & \lesssim_{\gamma,\nn} \nr{\tilde{w}_{\varepsilon_{k}}^{\ell_{k}} -w}_{L^1(\mm,\R^N)} + \nr{D\Pi(\tilde{w}_{\varepsilon_{k}}^{\ell_{k}})\circ D\tilde{w}_{\varepsilon_{k}}^{\ell_{k}} -D\Pi(w)\circ Dw}_{L^{\vp}(\mm,\R^N)} \\ & \lesssim_{\gamma,\nn} \nr{\tilde{w}_{\varepsilon_{k}}^{\ell_{k}} -w}_{L^1(\mm,\R^N)} + \nr{D\Pi(\tilde{w}_{\varepsilon_{k}}^{\ell_{k}})\circ (D\tilde{w}_{\varepsilon_{k}}^{\ell_{k}} -Dw)}_{L^{\vp}(\mm,\R^N)} \\ & \qquad +\nr{(D\Pi(\tilde{w}_{\varepsilon_{k}}^{\ell_{k}})-D\Pi(w))\circ Dw}_{L^{\vp}(\mm,\R^N)} \xrightarrow{k\to\infty} 0
    \end{align*}
 This concludes the proof. 
\end{proof}

\noindent
We now turn to the proof of Theorem \ref{theorem2}, which relies on the method developed in \cite{HJ}, suitably adapted to our framework. For the reader’s convenience, we briefly recall the main aspects of the construction of the approximation scheme, referring to Sections 2-4 of \cite{HJ} for further details on triangulations, skeletons, and retractions.

\begin{proof}[Proof of Theorem \ref{theorem2}]
In view of Remark \ref{remark:bdry}, we just need to prove the theorem only in the case $\partial \mm=\emptyset$. 

We denote by $T^l$ the $l$-dimensional skeleton of the triangulation $T$ of the manifold $\nn$, that is, the union of all $l$-dimensional simplices. Following the construction of \cite[Sections 2, 3, and 4]{HJ}, for $\varepsilon \in [0,1]$, we denote by $U_\varepsilon T^{\tx{k}}$ a neighborhood of the $\tx{k}$-skeleton $T^{\tx{k}}$ and by $O_\varepsilon T^{\tx{k}} := \operatorname{int}(\nn \setminus U_\varepsilon T^{\tx{k}})$. We also obtain sets $Y^{\tx{n}},Y^{\tx{n}-1},\dots,Y^{\tx{k}+1}$, where $Y^l$ is the set of points chosen inside the $l$-dimensional simplexes (for each $l = \tx{n}, \tx{n}-1,\dots, \tx{k}+1$),  a Lipschitz map $\eta_\varepsilon:\R^N \to \nn$ depending on these sets $Y^\tx{n}, \dots, Y^{\tx{k}+1}$ and $\varepsilon \in [0,1]$, such that
\eqn{eta1}
$$ \eta_{\varepsilon}|_{U_\varepsilon T^\tx{k}}= \text{Id}_{U_\varepsilon T^\tx{k}},$$
\eqn{eta2}
$$\text{Lip}(\eta_\varepsilon) \leq c\varepsilon^{-1},$$
for some constant $c$ independent of the choice of $Y^\tx{n},\dots,Y^{\tx{k}+1}$. Then, we set
$$
Q_\varepsilon T^{l-1} := \operatorname{int}\big(\nn \setminus P_{(Y^l,\varepsilon)}(\nn \setminus W^{\tx{n}-l})\big),
$$
where $P_{(Y^l,\varepsilon)}$ is the Lipschitz retraction map that retracts points in $\nn \setminus W^{\tx{n}-l}$ onto a neighborhood of the $(l-1)$-skeleton, and $W^{\tx{n}-l}$ is a set of singularities of dimension $n-l$, see \cite[pp. 1587]{HJ} for the detailed construction. The set $Q_\varepsilon T^{l-1}$ depends on $Y^{\tx{n}}, \dots, Y^l$ and $\varepsilon$, but there exists a constant $c>0$ such that the maximal number $ {\tx{k}}_{\tx{n}}(\varepsilon)$ of sets $Y^{\tx{n}}$ with pairwise disjoint corresponding sets $Q_{2\varepsilon} T^{\tx{n}-1}$ satisfies
\eqn{kk}
$$ {\tx{k}}_{\tx{n}}(\varepsilon) \ge c \, \varepsilon^{-\tx{n}}.$$ Analogously, for fixed  $Y^{\tx{n}}$, the maximal number $ {\tx{k}}_{\tx{n}-1}(\varepsilon)$ of sets $Y^{\tx{n}-1}$ with pairwise disjoint corresponding sets $Q_{2\varepsilon} T^{\tx{n}-2}$ satisfies $ {\tx{k}}_{\tx{n}-1}(\varepsilon) \ge c \, \varepsilon^{-(\tx{n}-1)}$, and similarly $\tx{k}_l(\varepsilon) \ge c \, \varepsilon^{-l}$. \\

\noindent Now, consider $Y^{\tx{n}}_1, \dots, Y^{\tx{n}}_{ {\tx{k}}_{\tx{n}}(\varepsilon)}$ the family of sets $Y^{\tx{n}}$ such that the corresponding sets $Q_{2\varepsilon,1} T^{\tx{n}-1}, \dots, \break Q_{2\varepsilon,  {\tx{k}}_{\tx{n}}(\varepsilon)} T^{\tx{n}-1}$ are pairwise disjoint.\\

\noindent Given $w \in W^{1,\vp}(\mm,\nn)$, we then have
$$ \int_{\cup_{i=1}^{ {\tx{k}}_\tx{n}(\varepsilon)}w^{-1}(Q_{2\varepsilon,i}T^{\tx{n}-1})} \varphi(x,|Dw|) \d\mathscr{H}^{\tx{m}} = \sum_{i=1}^{ {\tx{k}}_\tx{n}(\varepsilon)} \int_{w^{-1}(Q_{2\varepsilon,i}T^{\tx{n}-1})} \varphi(x,|Dw|) \d\mathscr{H}^{\tx{m}} \leq \nr{ \varphi(  \cdot,|Dw|) }_{L^1(\mm)},$$
and
\begin{equation*}
    \mathscr{H}^{\tx{m}}\big(\cup_{i=1}^{ {\tx{k}}_\tx{n}(\varepsilon)}w^{-1}(Q_{2\varepsilon,i}T^{\tx{n}-1}) \big)=\sum_{i=1}^{\tx{k}_\varepsilon}\mathscr{H}^{\tx{m}}\big(w^{-1}(Q_{2\varepsilon,i}T^{\tx{n}-1}) \big)\leq \mathscr{H}^{\tx{m}}(\mm).
\end{equation*}

\noindent Hence, there exists $j \in \{1,\dots, {\tx{k}}_\tx{n}(\varepsilon) \}$ such that
$$ \int_{w^{-1}(Q_{2\varepsilon,j}T^{\tx{n}-1})}\varphi(x,|Dw|) \d\mathscr{H^{\tx{m}}} \leq \frac{1}{ {\tx{k}}_\tx{n}(\varepsilon)} \nr{\varphi(\cdot,|Dw|)}_{L^1(\mm)} \overset{\eqref{kk}}{\leq} c \varepsilon^\tx{n} \nr{\varphi(\cdot,|Dw|)}_{L^1(\mm)},$$
and
\begin{equation*}
    \mathscr{H}^{\tx{m}}\big(w^{-1}(Q_{2\varepsilon,j}T^{\tx{n}-1}) \big)\leq c\,\varepsilon^{\tx{n}}\mathscr{H}^{\tx{m}}(\mm).
\end{equation*}
Fix the set $Y^\tx{n}_j$. Via the same reasoning, we find $Y^{\tx{n}-1}$ such that
$$ \int_{w^{-1}(Q_{2\varepsilon}T^{\tx{n}-2})}\varphi(x,|Dw|) \d\mathscr{H}^{\tx{m}} \leq c \varepsilon^{\tx{n}-1}\nr{\varphi(\cdot,|Dw|)}_{L^1(\mm)},$$
and
\begin{equation*}
    \mathscr{H}^{\tx{m}}\big(w^{-1}(Q_{2\varepsilon}T^{\tx{n}-2}) \big)\leq c\,\varepsilon^{\tx{n}-1}\mathscr{H}^{\tx{m}}(\mm).
\end{equation*}
Proceeding inductively, we find sets $Y^{\tx{n}},Y^{\tx{n}-1},\dots,Y^{ \tx{k}+1}$  such that, for $l=\tx{k},\dots,\tx{n}-1$,
$$ \int_{w^{-1}(Q_{2\varepsilon}T^{l})}\varphi(x,|Dw|) \d\mathscr{H}^{\tx{m}} \leq c \varepsilon^{ l+1}\nr{\varphi(\cdot,|Dw|)}_{L^1(\mm)},$$
and
\begin{equation*}
    \mathscr{H}^{\tx{m}}\big(w^{-1}(Q_{2\varepsilon}T^{l}) \big)\leq c\,\varepsilon^{l+1}\mathscr{H}^{\tx{m}}(\mm).
\end{equation*}
Hence, since
$$ O_{2\varepsilon}T^ \tx{k} = \displaystyle \cup_{i=1}^{\tx{n}- \tx{k}}Q_{2\varepsilon}T^{\tx{n}-i},$$
there exists a constant $c$ such that
\eqn{3}
$$ \int_{w^{-1}(O_{2\varepsilon}T^ \tx{k})} \varphi(x,|Dw|) \d\mathscr{H}^{\tx{m}} \leq  c (\varepsilon^\tx{n} + \dots + \varepsilon^{ \tx{k}+1}) \nr{\varphi(\cdot,|Dw|)}_{L^1(\mm)} \leq c \varepsilon^{ \tx{k}+1} \nr{\varphi(\cdot,|Dw|)}_{L^1(\mm)}, $$
and, similarly,
\eqn{4}
$$ \mathscr{H}^{\tx{m}}\big(w^{-1}(O_{2\varepsilon}T^ \tx{k})\big) \leq c \varepsilon^{\tx{k}+1}\mathscr{H}^\tx{m}(\mm).$$
Let us assume that for every $\varepsilon \in [0,1]$ the sets $Y^\tx{n}_{\varepsilon}, \dots, Y^{ \tx{k}+1}_\varepsilon$ are chosen in such a way that \eqref{3}-\eqref{4} hold, and divide the proof into two cases.
\newline
\textbf{Case 1}: $\gamma \in (1, \tx{k}+1)$. \\
Let $\eta_\varepsilon: \R^N \to \nn$ be the mapping satisfying \eqref{eta1} and \eqref{eta2} (which depends on the choice of $Y^\tx{n}_\varepsilon, \dots, Y^{ \tx{k}+1}_\varepsilon$), and let us prove that  $\eta_\varepsilon (w) \to w$ in $W^{1,\varphi}(\mm,\nn)$. Since by \eqref{eta1}  and \eqref{4},  $\eta_\varepsilon (w)\neq w$ on a set of arbitrary small measure,  and the the maps $\{\eta_\varepsilon(w)\}_\varepsilon$ are uniformly bounded, by dominated convergence theorem we have 
\eqn{im}
$$\int_{\mm}|\eta_\varepsilon (w) - w|^q \d\mathscr{H}^{\tx{m}} \xrightarrow{\varepsilon\to 0} 0,\quad\text{for all $q\in [1,\infty)$.}$$ 
We are left to prove the gradient convergence. 
By \eqref{eta1}, we have $\eta_\varepsilon(w)=w$ on $w^{-1}(U_\varepsilon T^ \tx{k})$, hence $D(\eta_\varepsilon (w)) = Dw$ almost everywhere on $w^{-1}(U_\varepsilon T^ \tx{k})$; then by \eqref{subadd}, \eqref{eta2}, $(\ref{du})$ and \eqref{3} we deduce
\begin{align} \label{im2}
 \int_{\mm} \varphi(x,|D(\eta_\varepsilon (w)) - Dw|)\d\mathscr{H}^{\tx{m}} &= \int_{w^{-1}(O_\varepsilon T^ \tx{k})} \varphi(x,|D(\eta_\varepsilon (w)) - Dw|)\d\mathscr{H}^{\tx{m}}  \noindent  \notag\\ &\leq c \left (\frac{1}{\varepsilon^\gamma} \int_{w^{-1}(O_\varepsilon T^ \tx{k})}\varphi(x,|D w|) \d \mathscr{H}^{\tx{m}} + \int_{w^{-1}(O_\varepsilon T^ \tx{k})} \varphi(x,|D w|)\d \mathscr{H}^{\tx{m}}\right )  \notag \\ & \leq c  \left (\varepsilon^{ \tx{k}+1 -\gamma} \nr{\varphi(\cdot,|Dw|)}_{L^1(\mm)} + \varepsilon^{\tx{k}+1}\nr{\varphi(\cdot,|Dw|)}_{L^1(\mm)} \right ), 
\end{align}
for some positive constant $c \equiv c(\gamma)$. So, letting $\varepsilon \to 0$ and using $\gamma< \tx{k}+1$, we get 
\eqn{c1}
$$\int_{\mm} \varphi(x,|D(\eta_\varepsilon (w)) - Dw|)\d\mathscr{H}^{\tx{m}} \to 0.$$
Now, let \(\{\tilde{w}_{\ell}\}_{\ell} \subset C^\infty(\mm, \mathbb{R}^N)\) be the sequence obtained via convolution and partition of unity as \eqref{successione}, and satisfying \eqref{sl1}. Denote by \(\pi: \mathbb{R}^N \to \mathbb{R}^N\) a smooth extension of the nearest point projection from a suitable tubular neighborhood of \(\nn\) onto \(\nn\). Since \(\tilde{w}_{\ell} \to w\) in measure, and using \eqref{4}, we can select, for every \(\varepsilon > 0\), an index \({\ell}(\varepsilon)\) such that
\eqn{ind}
$$ \mathscr{H}^{\tx{m}}\big((\pi \circ \tilde{w}_{{\ell}(\varepsilon)})^{-1}(\R^N \setminus U_\varepsilon T^ \tx{k})\big) \xrightarrow{\varepsilon \to 0}0.$$
See also the discussion of \cite[end of page 1589]{HJ}.\\

\noindent By the Lipschitz continuity of $\pi$ and \eqref{du1}-\eqref{du}, we also have 
\eqn{ind1}
$$\pi \circ \tilde{w}_{\ell} \to \pi \circ w=w \quad \text{in } W^{1,\varphi}(\mm,\R^N).$$ Hence, by \eqref{ind} and \eqref{ind1}, up to subsequence, we can assume
\eqn{c2}
$$ \int_{\mm} \varphi(x,|D(\pi \circ \tilde{w}_{{\ell}(\varepsilon)}) -Dw|) \d\mathscr{H}^{\tx{m}} < \varepsilon^{ \gamma+1}.$$
Define $v_\varepsilon := \pi \circ \tilde{w}_{{\ell}(\varepsilon)}$, and let us show that the following sequence of Lipschitz function
$$ w_\varepsilon := \eta_\varepsilon \circ v_\varepsilon \in \mathrm{Lip}(\mm,\nn)$$
converges in $W^{1,\varphi}(\mm,\nn)$ to $w$.
Once this is established, an additional regularization step (for instance, via convolution) applied to the sequence $w_\varepsilon$, coupled with the nearest point projection \cite[Lemma 2]{HJ2} yields the desired density result.\\

\noindent By \eqref{eta1} and \eqref{ind}, $w_\varepsilon \neq v_\varepsilon$ on a set of measure convergent to $0$ as $\varepsilon \to 0$, hence  by \eqref{ind1}
\eqn{s1}
$$\int_{\mm}|w_\varepsilon-w|\d\mathscr{H}^\tx{m} \to 0.$$ Now, let $A_\varepsilon=v_\varepsilon^{-1}(U_\varepsilon T^ \tx{k})$, so that
    \begin{align} \label{III}
     & \int_{\mm} \varphi (x,|Dw_\varepsilon - D (\eta_\varepsilon(w))|) \d\mathscr{H}^{\tx{m}} 
      \notag  \\
       & \qquad = \int_{A_\varepsilon} \varphi(x,|Dw_\varepsilon - D (\eta_\varepsilon(w))|) \d\mathscr{H}^{\tx{m}} + \int_{\mm \setminus A_\varepsilon} \varphi(x,|Dw_\varepsilon - D(\eta_\varepsilon( w))|) \d\mathscr{H}^{\tx{m}} 
      \notag  \\
         & \qquad =: \tx{I} + \tx{II}. 
    \end{align}
Note that $Dw_\varepsilon=D(\eta_\varepsilon( v_\varepsilon))=D v_\varepsilon$ almost everywhere on $A_\varepsilon$. Hence by \eqref{subadd}, \eqref{c1} and \eqref{c2}, we get
\eqn{cal}
$$ \tx{I}\leq  c\,\int_{A_\varepsilon}\varphi(x,|Dv_\varepsilon -Dw|)\d\mathscr{H}^{\tx{m}} + c\,\int_{A_\varepsilon}\varphi(x,|Dw -D(\eta_\varepsilon(w))|)\d\mathscr{H}^{\tx{m}} \xrightarrow{\varepsilon\to 0} 0,$$
where $c >0$. For $\tx{II}$, by \eqref{subadd} we observe that 
\begin{align*}
    \tx{II} & = \int_{\mm \setminus A_\varepsilon} \varphi(x,|D\eta_\varepsilon(v_\varepsilon)\circ Dv_\varepsilon - D\eta_\varepsilon(w) \circ Dw|) \d \mathscr{H}^{\tx{m}} \\ & \leq  c\int_{\mm \setminus A_\varepsilon} \varphi(x,|D\eta_\varepsilon(v_\varepsilon) \circ Dv_\varepsilon-D\eta_\varepsilon(v_\varepsilon)\circ Dw|) \d\mathscr{H}^{\tx{m}}  \\ & \qquad + c\int_{\mm \setminus A_\varepsilon} \varphi(x,|D\eta_\varepsilon(v_\varepsilon) \circ Dw-D\eta_\varepsilon(w)\circ Dw|) \d\mathscr{H}^{\tx{m}} \\ & \leq c \varepsilon^{-\gamma} \int_{\mm \setminus A_\varepsilon} \varphi(x,|Dv_\varepsilon - Dw|) \d\mathscr{H}^{\tx{m}} + c\varepsilon^{-\gamma}\int_{\mm \setminus A_\varepsilon} \varphi(x,|Dw|) \d\mathscr{H}^{\tx{m}},
\end{align*}
where in the last inequality we used \eqref{eta2} and $(\ref{du})$, with $c \equiv c(\gamma) >0$.
By \eqref{c2}, the first integral in the right hand side converges to $0$ as $\varepsilon \to 0$, while for the second one we observe that
\begin{align*}
    \varepsilon^{-\gamma}\int_{\mm \setminus A_\varepsilon} \varphi(x,|Dw|) \d\mathscr{H}^{\tx{m}} & \leq \varepsilon^{-\gamma} \int_{w^{-1}(O_{2\varepsilon}T^ \tx{k})}\varphi(x,|Dw|) \d\mathscr{H}^{\tx{m}} +  \varepsilon^{-\gamma} \int_{v_\varepsilon^{-1}(\overline{O_\varepsilon T^ \tx{k}})\setminus w^{-1}(O_{2\varepsilon}T^ \tx{k})}\varphi(x,|Dw|) \d\mathscr{H}^{\tx{m}} \\ & \qquad + \varepsilon^{-\gamma} \int_{v^{-1}_{\varepsilon}(\R^N \setminus \nn)}\varphi(x,|Dw|) \d\mathscr{H}^{\tx{m}} = {\tx{II}_1} + {\tx{II}_2} + {\tx{II}_3}.
\end{align*}
For ${\tx{II}_1}$ we observe that, by \eqref{3},
\eqn{i1}
$${\tx{II}_1} \leq c\varepsilon^{ \tx{k}+1-\gamma} \nr{\varphi(\cdot, |Dw|)}_{L^1(\mm)}.$$
For ${\tx{II}_3}$ we observe that the map $\tilde w_{{\ell}}$ converges to $w$ in measure, and $\mathrm{Im}(w)\subset \nn$, therefore
\begin{equation*}
    \mathscr{H}^{\tx{m}}\big((\pi \circ \tilde{w}_\ell)^{-1}(\R^N \setminus \nn)\big) \xrightarrow{\ell\to\infty}0;
\end{equation*}
thus, choosing a proper subsequence $\tilde w_{\ell(\varepsilon)}$ and consequently a subsequence of $v_\varepsilon=\pi\circ \tilde w_{\ell(\varepsilon)} $, we can assume that \begin{equation} \label{i2}
    \tx{II}_3 < \varepsilon.
\end{equation}
For what concerns $\tx{II}_2$, we use that $\pi \circ \tilde{w}_\ell$ converges to $w$ in measure and the distance between $\overline{O_\varepsilon T^ \tx{k}}$ and $\overline{U_{2\varepsilon} T^ \tx{k}}=\nn\setminus O_{2\varepsilon} T^{\tx{k}} $ is positive, thus getting 
$$ \mathscr{H}^{\tx{m}}\big((\pi \circ \tilde w_{\ell})^{-1}(\overline{O_\varepsilon T^ \tx{k}})\setminus w^{-1}(O_{2\varepsilon}T^ \tx{k})\big) \xrightarrow{\ell\to\infty} 0.$$
So again, up to subsequence of $\{v_\varepsilon \}$, we can assume that
\eqn{i3}
$${\tx{II}_2} < \varepsilon.$$ 
Therefore, by \eqref{i1}-\eqref{i3}, and using that $\gamma <  \tx{k}+1$, we obtain that also $\tx{II} \xrightarrow{\varepsilon\to 0} 0$. Using this information and \eqref{cal} into \eqref{III} we obtain $\int_{\mm} \varphi(x,|Dw_\varepsilon - D (\eta_\varepsilon( w))|) \d\mathscr{H}  \to 0$, which together with \eqref{s1}, \eqref{c1} yields $w_\varepsilon \to w$ in $W^{1,\varphi}(\mm,\nn)$. This concludes the proof in this case.  \newline
\textbf{Case 2}: $\gamma= \tx{k}+1$. \newline
We will show that $C^{\infty}(\mm,\nn)$ is weakly dense in $W^{1,\vp}(\mm,\nn)$. Let us point out what changes respect to the previous case. First of all, we observe that \eqref{im} holds, whereas \eqref{im2} becomes
\begin{align} \label{uu1}
    \int_{\mm} \varphi(x,|D(\eta_\varepsilon(w)) - Dw|) \d \mathscr{H}^{\tx{m}} \leq c  \nr{\varphi(\cdot,|Dw|)}_{L^1(\mm)}  .
\end{align} Similarly, \eqref{s1} holds and inequality \eqref{i1} becames
$ \tx{II}_1 \leq c \nr{\varphi(\cdot,|Dw|)}_{L^1(\mm)}.$ Therefore, 
\eqn{uu2}
$$ \int_{\mm} \varphi(x,|Dw_\varepsilon - D (\eta_\varepsilon(w))|) \d\mathscr{H}^{\tx{m}} \leq \tx{I} + \tx{II}_2 + \tx{II}_3 + c \nr{\varphi(\cdot,|Dw|)}_{L^1(\mm)}. $$
 Now, by \eqref{c2}, \eqref{cal}, \eqref{i2}-\eqref{uu2}  and  \eqref{subadd},  it follows that
\begin{equation*}
    \|Dw_\varepsilon\|_{L^{\varphi}(\mm,\nn)}\leq c\,(1+\|Dw\|_{L^{\varphi}(\mm,\nn)}).
\end{equation*}
Via an additional convolution and nearest point projection argument applied to $w_\varepsilon$ (see \cite[Lemma 2]{HJ2}) we obtain a sequence $\{\tilde{w}_k\}_k\subset {C^\infty(\mm,\nn)}$, $\tilde{w}_k\to w$ in $L^1(\mm,\nn)$, which is uniformly bounded in $W^{1,\varphi}(\mm,\nn)$. The reflexivity of such space finally yields  the desired result, i.e.,
$\tilde w_k \to w$ weakly in $W^{1,\varphi}(\mm,\nn)$.
\end{proof}
\noindent
We conclude this section by observing that Corollary \ref{corollary}, which establishes the absence of the Lavrentiev phenomenon in the settings we have considered, is a straightforward consequence of Theorem \ref{theorem1} and \ref{theorem2}.
\section{Counterexample} \label{sec4}
\noindent
This final section is devoted to proving the fundamental role played by assumption \eqref{v4}. When this condition fails, counterexamples arise, and this is precisely the content of Theorem \ref{tcontr}. We consider the double phase functional \eqref{dph}. In this context condition \eqref{v4} is implied by \eqref{1pq},
where $\alpha$ is the H\"older exponent of the function $a(\cdot)$. Assuming \eqref{pq} and taking the sphere
$\tx{S}^{N-1}_{{\Lambda}}$ as target manifold, we construct a map $u \in u_0 + W^{1,\varphi}((-1,1)^{\tx{m}},\tx{S}^{N-1}_{{\Lambda}})$ 
that cannot be approximated by smooth sphere-valued maps, where $\Lambda$ will be specified below. The construction, adapted from \cite{BDS}, provides the first vectorial counterexample and extends naturally to maps between manifolds. Note that, \eqref{1pq} is sharp when $p \leq \tx{m}$. We remark once again that the counterexample arises with a target manifold that is $(N-2)$-connected and even when $p \geq \tx{m}$, independently of the smoothness of the boundary data or the domain. Moreover, the functional has the Uhlenbeck structure, i.e. it is radial in the $z$-variable. So, take 
\begin{equation}\label{new:double}
    \varphi (x,|z|)= |z|^p +a(x)|z|^q, \quad z \in \R^{N\times \tx{m}},
\end{equation}
 where $a\in C^{\alpha}([-1,1]^\tx{m})$ will be defined below,  and assume \eqref{pq}. We set $Q_\mm:=(-1,1)^\tx{m}$
and $\overline{Q}_{\mm}:= [-1,1]^\tx{m}$. We start considering a Cantor set constructed in the following way: for \(\lambda \in (0,1/2)\) we take $\mathcal{C}_{\lambda,0}:=[-1/2,1/2]$,  then
 we define \(\mathcal{C}_{\lambda,k+1}\) inductively by removing the open middle portion of length \(1-2\lambda\) from each interval in \(\mathcal{C}_{\lambda,k}\), and we set $\mathcal{C}_\lambda := \bigcap_{k\ge 1} \mathcal{C}_{\lambda,k}$. The corresponding Cantor measure $\mu_\lambda$ is defined as the weak limit of the measures $\mu_{\lambda,k}:=(2\lambda)^{-k}\mathbb{I}_{\mathcal{C}_{\lambda,k}} \dx$, where $(2\lambda)^{-k}$ is chosen such that $\mu_{\lambda,k}([-1/2,1/2])=1=\mu([-1/2,1/2])$. Then, $\mu_\lambda(\R)=1$ and $\supp \mu_\lambda = \mathcal{C}_\lambda$. Finally, the $\tx{m}$-dimensional Cantor set $\mathcal{C}^{\tx{m}}_\lambda$ and its distribution $\mu_\lambda^{\tx{m}}$ are the Cartesian product of $\mathcal{C}_\lambda$ and $\mu_\lambda$, so that $\mathcal{C}^{\tx{m}}_\lambda = \cap_{k \geq 1}\mathcal{C}^{\tx{m}}_{\lambda,k}$.\\ 
 
 \noindent Now, by \eqref{pq}, we can  choose $p_0 >p$ such that 
 \eqn{qp0}
 $$q>p_0+\alpha \max \left \{1,\frac{p-1}{\tx{m}-1} \right \}.$$ Let us split the analysis in three cases according to the relation of $p_0$ with the dimension $\tx{m}$.  We start considering the case $p_0 \in (1,\tx{m})$. Let $\mathcal{C}:= \mathcal{C}^{\tx{m}-1}_\lambda \times \{0\}$, so we have
 $\dim(\mathcal{C}):=\frac{(\tx{m}-1)\log 2}{\log(1/\lambda)}$. First of all, we choose $\lambda \in (0,1/2)$ such that 
 \eqn{p0}
 $$p_0 = \tx{m} - \dim(\mathcal{C}).$$
 Set $x:=(\bar{x},x_{\tx{m}}) \in \R^{\tx{m}-1} \times \R$, and from \cite[Lemma 5]{BDS} we find maps $\chi_*, \chi_a \in C^{\infty}(\R^\tx{m} \setminus \mathcal{C})$, such that
\begin{equation}\label{test1}
    \mathbb{I}_{\{\dist(\bar{x},\mathcal{C}^{\tx{m}-1}_\lambda)\leq 2|x_{\tx{m}}| \}} \leq \chi_* \leq \mathbb{I}_{\{\dist(\bar{x},\mathcal{C}^{\tx{m}-1}_\lambda)\leq 4|x_{\tx{m}}| \}}, \quad \mathbb{I}_{\{\dist(\bar{x},\mathcal{C}^{\tx{m}-1}_\lambda)\leq |x_{\tx{m}}|/2 \}} \leq \chi_a \leq \mathbb{I}_{\{\dist(\bar{x},\mathcal{C}^{\tx{m}-1}_\lambda)\leq 2|x_{\tx{m}}| \}},
\end{equation}
 \begin{equation}\label{test2}
     |D \chi_*| \lesssim_{\tx{m}} |x_{\tx{m}}|^{-1} \mathbb{I}_{\{2|x_{\tx{m}}| \leq \dist(\bar{x},\mathcal{C}^{\tx{m}-1}_\lambda)\leq 4|x_{\tx{m}}| \}}, \quad |D \chi_a| \lesssim_{\tx{m}} |x_{\tx{m}}|^{-1} \mathbb{I}_{\{|x_{\tx{m}}|/2 \leq \dist(\bar{x},\mathcal{C}^{\tx{m}-1}_\lambda)\leq 2|x_{\tx{m}}| \}}.
 \end{equation}
Here $\mathbb{I}_A$ denotes the characteristic function of a set $A$.\\

\noindent Now, pick $\theta \in C^{\infty}_c(0,\infty)$ satisfying $\mathbb{I}_{(1/2,\infty)} \leq \theta \leq \mathbb{I}_{(1/4,\infty)} $, $\nr{\theta'}_\infty\leq 6$, and consider
\eqn{ztilde}
$$ \tx{Z}_{\tx{m}}(x) := \frac{|\bar{x}|^{1-\tx{m}}}{\mathscr{H}^{\tx{m}-1}(\partial \tx{B}_1)} \theta \left ( \frac{|\bar{x}|}{|x_{\tx{m}}|}\right ) \left [\begin{matrix}
0 & -\bar{x}^t \\
\bar{x} & 0 
\end{matrix} \right ], \quad \tx{Z}:=\left ( \mu_\lambda^{\tx{m}-1} \times \delta_0\right )*\tx{Z}_{\tx{m}}, \quad 
\tilde{\tx{Z}} := \sum_{i=1}^N \tx{Z} \otimes e_i, \quad
\tx{b} := 
\text{div}(\tilde{\tx{Z}}).$$
We point out that $\delta_0$ is the delta measure centered in zero and the symbol "$t$" denotes the transposition. Using \cite[Proposition 2 and 14]{BDS}, we have $\tx{Z}_{\tx{m}} \in W^{1,1}_{\loc}(\R^\tx{m}, \R^\tx{m} \otimes \R^\tx{m}) \cap C^{\infty}(\R^\tx{m}\setminus \{0\},\R^\tx{m} \otimes \R^\tx{m})$, $\tx{Z} \in W^{1,1}(Q_\mm, \R^\tx{m} \otimes \R^\tx{m}) \cap C^{\infty}(\overline{Q}_{\mm}\setminus \mathcal{C},\R^\tx{m} \otimes \R^\tx{m})$, $\tilde{\tx{Z}} \in W^{1,1}( Q_\mm, \R^N \otimes \R^\tx{m} \otimes \R^\tx{m}) \cap C^{\infty}(\overline{ Q_\mm}\setminus \mathcal{C},\R^N \otimes \R^\tx{m} \otimes \R^\tx{m})$ and $\tx{b} \in L^1( Q_\mm,\R^{N \times \tx{m}})\cap C^{\infty}(\overline{ Q_\mm}\setminus \mathcal{C}, \R^{N\times \tx{m}})$. 

We remark that the matrix 
$\tx{b}=\{\tx{b}^i_{j}\}^{i=1,\dots,N}_{j=1,\dots,\tx{m}} $ satisfies $\tx{b}^i_{j}=b_j$ for all $i=1,\dots,N$, where $\{b_j\}_{j=1,\dots,\tx{m}}$ is the vector field constructed in \cite[Definition 9]{BDS}. In particular by \cite[Proposition 18]{BDS}, we have
\begin{equation}\label{zero:b}
    \int_{Q_\mm} \langle Dw,\tx{b}\rangle\,\d x=0\quad\text{for all $w\in C^\infty_c(Q_\mm,\mathbb{R}^N)$.}
\end{equation}
Here $\langle A,B\rangle=\sum_{\substack{i=1,\dots,N \\ j=1,\dots,\tx{m}}}A^i_j B_j^i$ denotes the scalar product between two matrices $A,B\in \mathbb{R}^{N\times \tx{m}}$.\\

\noindent We now take $\phi \in C^{1}_c(Q_\mm)$ such that $\mathbb{I}_{(-3/4,3/4)^\tx{m}} \leq \phi \leq \mathbb{I}_{(-5/6,5/6)^\tx{m}}$ and $|D\phi| \lesssim_{\tx{m}} 1$, and define the functions
\eqn{ffs}
$$ u_*(x):=  \frac{1}{2}\text{sgn}(x_{\tx{m}}) \chi_*(x), \quad a(x):=|x_{\tx{m}}|^\alpha \chi_a(x), \quad \tilde{u}(x):= (1-\phi(x))u_*(x).$$
Note that  by contruction of $\mathcal{C}$, \eqref{test1} and $\phi$, we have $u_* \in L^{\infty}(\overline{ Q_\mm},\R) \cap W^{1,1}( Q_\mm,\R)\cap C^{\infty}(\overline{ Q_\mm}\setminus \mathcal{C},\R)$, $\tilde{u} \in C^{\infty}(\overline{ Q_\mm},\R)$, $0 \leq a(\cdot) \in C^{\alpha}(\overline{ Q_\mm})$.\\

\noindent Let us consider 
\begin{equation}\label{def:chiamo}
    u_{**}(x) :=  (u_*(x), 1, \dots,1 ) \in \R^{N}, \quad  u^{\tx{S}}_*(x) :=\frac{u_{**}(x)}{|u_{**}|} \in \tx{S}^{N-1},
\end{equation}
$$ \tilde{\tilde{u}}(x) := (\tilde{u}(x),1,\dots,1) \in \R^N, \quad \tilde{u}^{\tx{S}}(x):= \frac{\tilde{\tilde{u}}(x)}{|\tilde{\tilde{u}}|} \in \tx{S}^{N-1}.$$
Note that $u^{\tx{S}}_* \in W^{1,1}(  Q_\mm,\tx{S}^{N-1})\cap C^{\infty}(\overline{  Q_\mm}\setminus \mathcal{C},\tx{S}^{N-1})$, and $\tilde{u}^{\tx{S}}\in C^{\infty}(\overline{Q}_\mm,\tx{S}^{N-1})$.
Then, by \cite[Lemma 6, Proposition 15, Corollary 16]{BDS} we have
\eqn{cases}
$$
\begin{cases}
   & |Du_*^{\tx{S}}| \lesssim_{\tx{m},N} |x_{\tx{m}}|^{-1}\mathbb{I}_{\{2|x_{\tx{m}}| \leq \dist(\bar{x}, \mathcal{C}^{\tx{m}-1}_\lambda)\leq 4|x_{\tx{m}}| \}}, \ Du_*^{\tx{S}} \in L^{p_0,\infty}(Q_\mm,\R^{N\times \tx{m}}), \\ & \{x \in   Q_\mm: |Du_*^{\tx{S}}| \neq 0 \} \subset \{x \in   Q_\mm: a(x)=0 \} \\ & \tx{b} \in L^{p_0',\infty}(Q_\mm,\R^{N\times \tx{m}}), \quad |\tx{b}| \lesssim_{\tx{m},N} |x_{\tx{m}}|^{1-p_0} \mathbb{I}_{\{\dist(\bar{x},\mathcal{C}_\lambda^{\tx{m}-1}) \leq |x_{\tx{m}}|/2\}}  \\ & \{x \in   Q_\mm: |\tx{b}| > 0\}\subset \{x \in   Q_\mm: a(x)=|x_{\tx{m}}|^{\alpha}\}.
\end{cases}
$$
Then, by $(\ref{cases})_{1,2}$, and since $p_0>p$, it holds $\varphi(x,|Du_*^{\tx{S}}|)= |Du_*^{\tx{S}}|^p\in L^{1}(Q_\mm)$. Observe also that
\eqn{ac}
$$ \varphi(x,|\tx{b}|) \geq |x_{\tx{m}}|^\alpha |\tx{b}|^q, $$
so, using $(\ref{cases})_{3,4}$ and \eqref{ac}, we get
\begin{equation}\label{cc1}
    \begin{split}
         \varphi^*(x,|\tx{b}|)&= \sup_{\xi \in \R^{N\times \tx{m}}} \left \{\langle \tx{b},\xi \rangle - \varphi(x,|\xi|) \right \} \leq \sup_{\xi \in \R^{N\times \tx{m}}} \left \{\langle \tx{b},\xi \rangle - |x_{\tx{m}}|^{\alpha}|\xi|^q\right \} 
         \\
         &\leq(|x_{\tx m}|^\alpha\,|z|^q)^* \lesssim_{\tx{m},q} |x_{\tx{m}}|^{-\frac{\alpha}{q-1}}\tx{b}^{q'}.
    \end{split}
\end{equation}
Therefore, using \eqref{cc1}, $(\ref{cases})_3$, \cite[Lemma 6]{BDS}, \eqref{p0} and \eqref{qp0}, we obtain
\begin{align} \label{cc3}
    \int_{Q_\mm} \varphi^*(x,|\tx{b}|) \dx  &  \leq c\int_{Q_\mm}|x_{\tx{m}}|^{-\frac{\alpha}{q-1}}|\tx{b}|^{q'} \dx \notag \\ & \leq c\int_{Q_\mm} \mathbb{I}_{\{ \dist(\bar{x},\mathcal{C}^{\tx{m}-1}_\lambda)\leq |x_{\tx{m}}|/2\}}|x_{\tx{m}}|^{\frac{-\alpha + q(1-p_0)}{q-1}} \dx \notag \\ & \leq c\int_0^1 t^{\frac{-\alpha + q(1-p_0)}{q-1}} \mathscr{H}^{\tx{m}-1}\left ( \dist(\cdot, \mathcal{C}^{\tx{m}-1}_\lambda)  \leq t/2\right ) \dt \notag \\ & \leq c\int_0^1 t^{ \frac{1-\alpha  -p_0}{q-1}} \dt \leq c_1(\tx{m},N,q,\alpha,p_0) < \infty.
\end{align}
We point out that for any constant $m \geq 1$, by $(\ref{cases})_{1,2}$, it holds
\begin{align*}
    \int_{Q_\mm} \vp(x,m|Du^{\tx{S}}_*|) \dx  =  m^p \int_{Q_\mm}|Du^{\tx{S}}_*|^p \dx =: m^p c_2(\tx{m},N,p) <\infty,
\end{align*}
and, for any $\sigma \geq 1$, by \eqref{cc1} and \eqref{cc3}, we have
\begin{align} \label{laut}
     \int_{Q_\mm}\varphi^*(x,\sigma |\tx{b}|) \dx   \leq \sigma^{q'} \int_{Q_\mm} |x_{\tx{m}}|^{-\frac{\alpha}{q-1}}|\tx{b}|^{q'} \dx \leq  \sigma^{q'}c_1(\tx{m},N,q,\alpha,p_0) <\infty.
\end{align}
Now, for $m_* \geq 1$, we define 
$$ u_0^{\tx{S}}:= m_* u_*^{\tx{S}} \in  C^{\infty}(\overline{Q}_{\mm}\setminus \mathcal{C},\tx{S}^{N-1}_{m_*}), \quad \tilde{u}_0^{\tx{S}}:= m_* \tilde{u}^{\tx{S}} \in C^{\infty}(\overline{Q}_{\mm}, \tx{S}^{N-1}_{m_*}).$$
 Observe that, since $\varphi(x,|Du_0^{\tx{S}}|)=|Du_0^{\tx{S}}|^p$ by \eqref{cases}$_{2}$, for any $\sigma_* \geq 1$ it holds
$$\int_{Q_\mm}\varphi(x,|Du_0^{\tx{S}}|) \dx + \int_{Q_\mm} \varphi^*(x,\sigma_*|\tx{b}|) \dx \leq m_*^pc_2+ \sigma_*^{q'}c_1 \leq m_*^{p_0} (m^{p-p_0}  )+\sigma_*^{p_0'} ( \sigma_*^{q'-p_0'} c_1 ).$$
Recalling that $p<p_0, q'<p_0'$, and taking $\sigma_*=m^{p_0-1}_*$, we can choose $m_*$ so large that
$$   m_*^{p_0} (m^{p-p_0}  )+\sigma_*^{p_0'} ( \sigma_*^{q'-p_0'} c_1 ) < \frac{m_* \sigma_*}{2\sqrt{\frac{1}{4}+N-1}},$$
hence we obtain 
\eqn{by}
$$\int_{Q_\mm}\varphi(x,|Du_0^{\tx{S}}|) \dx + \int_{Q_\mm} \varphi^*(x,\sigma_*|\tx{b}|) \dx < \frac{m_* \sigma_*}{2\sqrt{\frac{1}{4}+N-1}}.$$
We note that by construction
\eqn{db}
$$\tilde{u}_0^{\tx{S}} =  u_0^{\tx{S}} \text{ in } \overline{Q}_{\mm}\setminus (-5/6,5/6)^{\tx{m}}.$$
Since $\tilde{u}_0^{\tx{S}} \in C^{\infty}(\overline{Q}_{\mm}, \tx{S}^{N-1}_{m_*})$, for any $0<\tau<\sigma_* m_*/8(1/4 + N-1)^{1/2}$ we can find $v_\tau \in  C^{\infty}_{\tilde{u}_0^{\tx{S}}}(\overline{Q}_{\mm},\tx{S}^{N-1}_{m_*})$ such that
\begin{align} \label{ff}
    \inf_{w \in  C^{\infty}_{\tilde{u}_0^{\tx{S}}}(\overline{Q}_{\mm},\tx{S}^{N-1}_{m_*})} \int_{Q_\mm} \varphi(x,|Dw|) \dx & \mathrel{\eqmathbox{\overset{\mathrm{}}{>}}} \int_{Q_\mm} \varphi(x,|Dv_\tau|) \dx- \tau \notag \\ & \mathrel{\eqmathbox{\overset{\mathrm{}}{\geq}}} \sigma_* \int_{Q_\mm} \langle Dv_{\tau}, \tx{b} \rangle \dx - \int_{Q_\mm} \varphi^*(x,\sigma_*|
    \tx{b}|) \dx -\tau \notag \\ & \mathrel{\eqmathbox{\overset{\mathrm{\eqref{by}}}{>}}}\sigma_* \int_{Q_\mm} \langle D(v_{\tau}-\tilde{u}_0^{\tx{S}}),\tx{b}\rangle \dx + \sigma_*\int_{Q_\mm} \langle D\tilde{u}_0^{\tx{S}},\tx{b} \rangle \dx \notag \\ & \qquad +  \int_{Q_\mm} \varphi(x,|Du_0^{\tx{S}}|) \dx - \frac{\sigma_* m_*}{2\sqrt{\frac{1}{4}+N-1}} - \tau \notag \\ & \mathrel{\eqmathbox{\overset{\mathrm{}}{=}}} \sigma_* m_* \int_{Q_\mm} \langle D\tilde{u}^{\tx{S}}, \tx{b} \rangle \dx + \int_{Q_\mm} \varphi(x,|Du_0^{\tx{S}}|) \dx - \frac{\sigma_* m_*}{2\sqrt{\frac{1}{4}+N-1}} - \tau \notag \\ & \mathrel{\eqmathbox{\overset{\mathrm{}}{>}}} \frac{3\sigma_* m_*}{8\sqrt{\frac{1}{4}+N-1}} + \int_{Q_\mm} \varphi(x,|Du_0^{\tx{S}}|) \dx\notag \\ & \mathrel{\eqmathbox{\overset{\mathrm{\eqref{db}}}{\geq}}} \frac{3}{8\sqrt{\frac{1}{4}+N-1}} + \inf_{w \in  W^{1,\varphi}_{ \tilde{u}_0^{\tx{S}}}(Q_\mm,\tx{S}^{N-1}_{m_*})} \int_{Q_\mm} \varphi(x,|Dw|) \dx \notag \\ & \mathrel{\eqmathbox{\overset{\mathrm{}}{>}}}  \inf_{w \in  W^{1,\varphi}_{ \tilde{u}_0^{\tx{S}}}(Q_\mm,\tx{S}^{N-1}_{m_*})} \int_{Q_\mm} \varphi(x,|Dw|) \dx;
\end{align}
above, in the second inequality we used the definition of $\varphi^*$ in \eqref{young:conj}, in the first equality we 
used the definition of $u^{\tx{S}}_0=m_*u_*^{\tx{S}}$, and the fact that $\int_{Q_\mm} \langle D(v_{\tau}-\tilde{u}_0^{\tx{S}}),\tx{b}\rangle \dx=0$ due to \eqref{zero:b}; finally, in the fourth inequality, we used  that 
\begin{equation}\label{fourth}
    \int_{Q_\mm} \langle D\tilde{u}^{\tx{S}},\tx{b}\rangle \dx=\frac{1}{\sqrt{\tfrac{1}{4}+N-1}}.
\end{equation}
 Indeed, by \cite[proof of Proposition 19]{BDS},  we can observe that $\tx{b}=0$ except on $\{x_\tx{m}= \pm 1\} \cap \partial Q_\mm$; therefore, denoting by $\nu$ the outward normal of $\partial Q_\mm$, we have
\begin{align} \label{p19}
\int_{\partial Q_\mm} (\tx{b}\, \nu) \cdot u_*^{\tx{S}} \, \d S 
&= \int_{\{x_\tx{m}=1\}} \sum_{i=1}^{N} \bigl( \tx{b}(\bar{x},1) e_\tx{m} \bigr)_i \; (u_*^{\tx{S}})_i(\bar{x},1) \, \d\bar{x} \notag \\
&\quad + \int_{\{x_\tx{m}=-1\}} \sum_{i=1}^{N} \bigl( \tx{b}(\bar{x},-1)  (-e_\tx{m}) \bigr)_i \; (u_*^{\tx{S}})_i(\bar{x},-1) \, \d\bar{x}.
\end{align}
On $\{x_\tx{m}=1\}$, we have $u_*=1/2$, hence recalling \eqref{def:chiamo}
\eqn{1p19}
$$
(u_*^{\tx{S}})_i(\bar{x},1)=\frac{\frac12\delta_{i1}+\sum_{j=2}^{N}\delta_{ij}}{\sqrt{\frac14+(N-1)}},
\qquad
\bigl(\tx{b}(\bar{x},1)\,e_\tx{m}\bigr)_i = b_{\tx{m}}(\bar{x},1)\ \text{for every } i,
$$
and on $\{x_\tx{m}=-1\}$, we have $u_*=-1/2$, so that
\eqn{2p19}
$$
(u_*^{\tx{S}})_i(\bar{x},-1)=\frac{-\frac12\delta_{i1}+\sum_{j=2}^{N}\delta_{ij}}{\sqrt{\frac14+(N-1)}},
\qquad
\bigl(\tx{b}(\bar{x},-1)\,(-e_\tx{m})\bigr)_i = -b_{\tx{m}}(\bar{x},1)\text{ for every $i$,}
$$
where $b=\{b_j\}_{j=1,\dots,\tx m}$ is the vector field of \cite[Definition 9]{BDS}.
Substituting \eqref{1p19} and \eqref{2p19} into \eqref{p19}, we get
\begin{align*}
\int_{\partial Q_\mm} (\tx{b}  \nu) \cdot u_*^{\tx{S}} \, \d S
&= \frac{1}{\sqrt{\frac14+(N-1)}} \int_{\{x_\tx{m}=1\}} b_{\tx{m}}(\bar{x},1)
      \Bigl[ \bigl(\tfrac12+N-1\bigr)+\bigl(\tfrac12-N+1\bigr) \Bigr] \, \d\bar{x} \\
&= \frac{1}{\sqrt{\frac14+(N-1)}} \int_{\{x_\tx{m}=1\}} b_{\tx{m}}(\bar{x},1) \, \d\bar{x},
\end{align*}
 and by \cite[Proposition 19]{BDS}, the last integral equals $1$; therefore by the divergence theorem and recalling $\tilde{u}^{\tx{S}}=u_*^{\tx{S}}$ on $\partial Q_\mm$, we have
\[
\int_{Q_\mm}\langle D\tilde{u}^{\tx{S}},\tx b\rangle \dx=\int_{\partial Q_\mm} (\tx{b} \nu) \cdot u_*^{\tx{S}} \, \d S
= \frac{1}{\sqrt{\frac14+(N-1)}},
\]
and this proves \eqref{fourth}. Therefore, \eqref{ff} shows the presence of the Lavrentiev phenomenon, that is \eqref{lavrentiev} in Theorem \ref{tcontr}.\\

\noindent Next, by direct methods, there exists $v  \in W^{1,\varphi}_{\tilde{u}_0^{\tx{S}}}(Q_\mm,\tx{S}^{N-1}_{m_*})$ such that
\eqn{fa}
$$ \inf_{w \in  W^{1,\varphi}_{ \tilde{u}_0^{\tx{S}}}(Q_\mm,\tx{S}^{N-1}_{m_*})} \int_{Q_\mm} \varphi(x,|Dw|) \dx = \int_{Q_\mm} \varphi(x,|Dv|) \dx.$$
We aim show that $v$ cannot be approximated in $W^{1,\varphi}$-norm by maps in $C^{\infty}_{\tilde{u}_0^{\tx{S}}}(\overline{Q}_{\mm}, \tx{S}^{N-1}_{m_*})$. Assume by contradiction that there exists a sequence $\{ v_{\ell}\}_{\ell} \subset  C^{\infty}_{\tilde{u}_0^{\tx{S}}}(\overline{Q}_{\mm},\tx{S}^{N-1}_{m_*})$ such that $v_{\ell} \to v$ in $W^{1,\varphi}(Q_\mm)$. Then, up to subsequences, by \eqref{fa}  it holds
\begin{align*}
    \inf_{w \in  C^{\infty}_{\tilde{u}_0^{\tx{S}}}(\overline{Q}_{\mm},\tx{S}^{N-1}_{m_*})} \int_{Q_\mm} \varphi(x,|Dw|) \dx & \leq \int_{Q_\mm} \varphi(x,|Dv_{\ell}|) \dx \\ &\to \int_{Q_\mm} \varphi(x,|Dv|) \dx \\ & = \inf_{w \in  W^{1,\varphi}_{ \tilde{u}_0^{\tx{S}}}(Q_\mm,\tx{S}^{N-1}_{m_*})} \int_{Q_\mm} \varphi(x,|Dw|) \dx,
\end{align*}
which  contradicts \eqref{ff}. Taking $\Lambda = m_*$, $\tilde{u}^{\tx{S}}_0 = u_0$ and $v=\bar{u}$, the proof of Theorem \ref{tcontr} in the case $p_0<\tx{m}$ is concluded.
Let us now analyze the other two cases. \\ \\ \noindent
Let $p_0=\tx{m}$. In this case, we take $\mathcal{C}=0$, and consider $\theta_a \in C^{\infty}_c(0,\infty)$ be such that $\mathbb{I}_{(2,\infty)} \leq \theta_a \leq \mathbb{I}_{(1/2,\infty)}$ and $\nr{\theta_a'}_{\infty} \leq 6$. We make the following changes in \eqref{ztilde} and \eqref{ffs}:
$$ \tilde{\tx{Z}}:= \sum_{i=1}^N \tx{Z}_{\tx{m}} \otimes e_i, \quad u_*(x)= \frac{1}{2}\text{sgn}(x_{\tx{m}})\theta\left ( \frac{|x_{\tx{m}}|}{|\bar{x}|}\right), \quad a(x):= |x_{\tx{m}}|^{\alpha} \theta_a(x) \left ( \frac{|x_{\tx{m}}|}{|\bar{x}|} \right ). $$
Then, $(\ref{cases})_{2,4}$ still holds and by \cite[Proposition 15]{BDS} we have $|Du_*^{\tx{S}}|\lesssim_{\tx{m},N}|x_\tx{m}|^{-1}\mathbb{I}_{\{ 2|x_\tx{m}|\leq |\bar{x}| \leq 4|x_\tx{m}|\}}$ and $|\tx{b}| \lesssim_{\tx{m},N} |x_\tx{m}|^{1-\tx{m}}\mathbb{I}_{\{2|\bar{x}|\leq |x_\tx{m}| \leq 4|\bar{x}| \}}$, with $Du_*^\tx{S} \in L^{\tx{m},\infty}(Q_\mm, \R^{N \times \tx{m}})$ and $\tx{b} \in L^{\tx{m}',\infty}(Q_\mm, \R^{N \times \tx{m}})$. Moreover, \eqref{cc1} holds and, using again \cite[Lemma 6]{BDS}, we obtain \eqref{cc3}. The proof then follows exactly as in the previous case. \\ \\
For $p_0>\tx{m}$, let $\mathcal{C}:=\{ 0\}^{\tx{m}-1} \times \mathcal{C}_\lambda$ and $\dim(\mathcal{C})= \frac{\log 2}{\log(1/\lambda)}$, where $\lambda \in  ( 0, 1/2 )$ is chosen such that 
\eqn{p000}
$$p_0 = \frac{\tx{m}-\dim(\mathcal{C})}{1-\dim(\mathcal{C})}.$$ 
\noindent Let $\rho \in C^{\infty}(\R^{\tx{m}}\setminus \mathcal{C})$ be such that $\mathbb{I}_{\{\dist(x_\tx{m}, \mathcal{C}_\lambda)\leq 2|\bar{x}| \}}\leq \rho \leq \mathbb{I}_{\{\dist(x_\tx{m},\mathcal{C}_\lambda) \leq 4|\bar{x}| \}} $, $|D\rho| \lesssim_{\tx{m}} |\bar{x}|^{-1}\mathbb{I}_{\lbrace2|\bar{x}|\leq \dist(x_\tx{m}, \mathcal{C}_\lambda) \leq 4|\bar{x}|\rbrace}$ and let $\rho_a \in C^\infty(\R^\tx{m} \setminus \mathcal{C})$ be such that $\mathbb{I}_{\{\dist(x_{\tx{m}}, \mathcal{C_\lambda}) \leq |\bar{x}|/2 \}} \leq \rho_a \leq \mathbb{I}_{\{ \dist(x_{\tx{m}}, \mathcal{C}_\lambda) \leq 2|\bar{x}|\}}$ and 
\newline $|D \rho_a| \lesssim_{\tx{m}} |\bar{x}|^{-1}\mathbb{I}_{\{ |\bar{x}|/2 \leq \dist(x_{\tx{m}},\mathcal{C}_\lambda) \leq 2|\bar{x}|\}}$, see \cite[Lemma 5]{BDS}, and define
$$ \tilde{\tx{Z}}(x)= \sum_{i=1}^N  \left (\frac{|\bar{x}|^{1-\tx{m}}}{\mathscr{H}^{\tx{m}-1}(\partial \tx{B}_1)}\left[\begin{matrix}
0 & -\bar{x}^t \\
\bar{x} & 0 
\end{matrix} \right ]\rho(x) \right ) \otimes e_i, \quad u_*(x) = (\delta_0^{\tx{m}-1}\times     \mu_\lambda) \ast \left (\frac{1}{2}\text{sgn}(x_{\tx{m}})\theta\left ( \frac{|x_{\tx{m}}|}{|\bar{x}|}\right) \right ),$$ $$ a(x):= |\bar{x}|^{\alpha}(1-\rho_a)(x).$$
The other functions in \eqref{ztilde} and \eqref{ffs} are defined in the same way. Then, $(\ref{cases})_2$ holds, while $\{x \in Q_\mm: |\tx{b}| > 0\}\subset \{x \in Q_\mm: a(x)=|\bar{x}|^{\alpha}\}$, and by \cite[Proposition 15]{BDS}, $|Du_*^{\tx{S}}|\lesssim_{\tx{m},N} |\bar{x}|^{\dim(\mathcal{C})-1}\mathbb{I}_{\{ \dist(x_\tx{m}, \mathcal{C}_\lambda) \leq |\bar{x}|/2\}}$ and $|\tx{b}| \lesssim_{\tx{m},N} |\bar{x}|^{1-\tx{m}}\mathbb{I}_{\{2|\bar{x}|\leq \dist(x_\tx{m},\mathcal{C}_\lambda)\leq 4|\bar{x}|\}}$, with $Du_*^\tx{S} \in L^{\tx{m},\infty}(Q_\mm, \R^{N \times \tx{m}})$ and $\tx{b} \in L^{\tx{m}',\infty}(Q_\mm, \R^{N \times \tx{m}})$. In this case, we have
$$\vp^*(x,|\tx{b}|) \leq \frac{1}{q'}|\bar{x}|^{-\frac{\alpha}{q-1}}|\tx{b}|^{q'}.$$
Applying \cite[Lemma 6]{BDS} we get $\vp^*(\cdot,|\tx{b}|) \in L^1(Q_\mm)$. Moreover, we observe that \eqref{laut} still holds, i.e., $ \int_{Q_\mm}\vp^*(\cdot,\sigma |\tx{b}|) \leq c_1 \sigma^{q'} < \infty$.  The rest of the calculations follows as in the sub-dimensional case. This concludes the proof of Theorem \ref{tcontr}.

\bigskip{}{}

 \par\noindent {\bf Acknowledgments.} C.A. Antonini is a postdoctoral fellow of the National Institute for Advanced Mathematics (INdAM) at the University of Florence.
F. De Filippis is a postdoctoral fellow at the University of Salzburg.
Part of this work was carried out while both authors were postdoctoral fellows at the University of Parma. Part of this work was conducted during C. Pacchiano Camacho’s postdoctoral appointment at UNAM, funded by the Simons Foundation.

\bigskip{}{}

 \par\noindent {\bf Data availability statement.} Data sharing not applicable to this article as no datasets were generated or analysed during the current study.
 \addtocontents{toc}{\protect\setcounter{tocdepth}{-1}}
\section*{Compliance with Ethical Standards}
\label{conflicts}
\addtocontents{toc}{\protect\setcounter{tocdepth}{2}}
\smallskip
\par\noindent
{\bf Funding}.
\\ (i) C.A. Antonini was partly funded by GNAMPA   of the Italian INdAM - National Institute of High Mathematics (grant number not available);
\\ (ii) F. De Filippis has been partially supported through the INdAM - GNAMPA Project (CUP E5324001950001);
\\(iii)  C. Pacchiano Camacho was supported by a grant from Simons Foundation International SFI-MPS-T-Institutes-00011977 JS;  \\ (iv) This research was funded in whole or in part by the Austrian Science Fund (FWF) [10.55776/PAT1850524]. For open access purposes, the author has applied a CC BY public copyright license to any author accepted manuscript version arising from this submission; \\ (v) This work is supported by the European Research Council (ERC) under the European Union’s Horizon 2020 research and innovation programme (grant agreement No. 101220121).

\bigskip
\par\noindent
{\bf Conflict of Interest}. The authors declare that they have no conflict of interest.

\end{document}